\newcommand{\ovr}{\overline{r}}
\newcommand{\ovR}{\overline{R}}
\newcommand{\e}{\varepsilon}
\newtheorem{theorem}{Theorem}
\newtheorem{proposition}[theorem]{Proposition}
\newtheorem{lemma}[theorem]{Lemma}
\def\H{\mathbb{H}}
\def\R{\mathbb{R}}
\def\length{\mathrm{length}}
\def \eps {\epsilon}
\def \P {{\bf P}}
\def \E {{\bf E}}
\def \_reg {\rightarrow_{\bf reg}}
\def\maxdeg/{\Delta}
\def\area{\mathop{\mathrm{area}}}
\def\length{\mathop{\mathrm{length}}}
\def\OC{\mathcal C}
\def\VC{\mathcal W}
\title{Asymptotics of visibility in the hyperbolic plane}
\author{Johan Tykesson\thanks{The Weizmann Institute of Science,
    Faculty of Mathematics and Computer Science, POB 26, Rehovot 76100, Israel. E-mail: {\tt
      johan.tykesson@gmail.com}. Research supported by a post-doctoral grant of the Swedish
     Research Council.} \and Pierre Calka\thanks{Laboratoire de Math\'ematiques Rapha\"el Salem, UMR 6085 CNRS-Universit\'e de Rouen, Avenue de l'Universit\'e, BP.12 Technop\^ole du Madrillet F76801 Saint-Etienne-du-Rouvray, France. E-mail: {\tt pierre.calka@univ-rouen.fr}.}}
\begin{document}
\maketitle
\footnotetext{{\em American Mathematical Society 2000 subject
classifications.} Primary 60D05, Secondary 60G55} \footnotetext{
{\em Key words and phrases. Boolean model, hyperbolic geometry, visibility, Poincar\'e disc, Poisson line process}
}
\begin{abstract}
At each point of a Poisson point process of intensity
$\lambda$ in the hyperbolic place, center a ball of bounded random
radius. Consider the probability $P_r$ that from a fixed point, there is
some direction in which one can reach distance $r$ without hitting
any ball. It is known \cite{BJST} that if $\lambda$ is strictly
smaller than a critical intensity $\lambda_{gv}$ then $P_r$ does
not go to $0$ as $r\to \infty$. The main result in this note shows
that in the case $\lambda=\lambda_{gv}$, the probability of
reaching distance larger than $r$ decays essentially polynomial,
while if $\lambda>\lambda_{gv}$, the decay is exponential. We also
extend these results to various related models and we finally obtain asymptotic results in several situations.
\end{abstract}
\section{Introduction}\label{intro}

Let $X$ be a homogeneous Poisson point process with intensity
$\lambda\in (0,\infty)$ in the hyperbolic plane $\H^2$. At each
point of $X$, center a ball of a bounded random radius,
independently for all points. Fix a base-point $o\in \H^2$. In
\cite{BJST}, it was shown that there is a critical intensity
$\lambda_{gv}\in (0,\infty)$, such that if $\lambda<
\lambda_{gv}$, then with positive probability there is some
geodesic ray, starting at $o$, such that it does not hit any of
the balls. In other words, if you stand at $o$, then with positive
probability you have visibility to infinity inside the complement
of the balls in some direction. Of course, such a direction much
be exceptional, since in a given direction, you will hit
infinitely many balls with probability one.

In \cite{BJST} it was also shown that as soon as $\lambda\ge
\lambda_{gv}$, with probability one, there is no direction in
which you can see to infinity. In other words, the set of visible
points from $o$ are with probability $1$ within some finite random
distance. In this note we mainly investigate the probability that
there is some direction in which you can see to a distance larger than
$r$ inside the complement of the balls, when $\lambda\ge
\lambda_{gv}$. In this region, the probability to see distance
larger than $r$ in some direction goes to $0$ as $r$ approaches
infinity, and here we are interested in what the decay looks like
for large $r$. We will see that at the critical value, this decay
is essentially polynomial while above criticality, the decay is
essentially exponential which is different from the decay for the visibility in a fixed direction. This also differs from the Euclidean case,
where Calka et. al. \cite{CMPB} showed that for every $\lambda>0$,
one has exponential decay and the same decay as for the visibility in a fixed direction. We also generalize these results to
visibility outside a Poisson process on the space of lines in
$\H^2$. Indeed, Benjamini et al. \cite{BJST} extended a previous work due to S. Porret-Blanc \cite{PB} to show that there is a critical intensity for the visibility to infinity in a Poisson line process in $\H^2$. The decay of the distribution tail of the total visibility differs from the Euclidean case which has been studied before \cite{Cal}.

The rest of the paper is organized as follows. In
Section~\ref{notation} we introduce the model of main interest
more carefully and state the main result, Theorem~\ref{maintthm1}.
In Section~\ref{pfmain} we give the proof of
Theorem~\ref{maintthm1}. We then discuss some extensions of the
main results to other models, in particular the line process model
in Section~\ref{extensions}. Section~\ref{nearcritical} provides the behaviour of the total visibility near the critical point and for small intensity. In the last section, we show an asymptotic result when the size of the balls goes to zero and the intensity increases accordingly.
\section{Notation and main results}\label{notation} Before turning to our
results, we introduce the model more carefully. We will work in
the Poincar\'e disc model of $\H^2$. This is the unit disc $\{z\in
{\mathbb C}\,:\,|z|<1\}$ equipped with the metric
$$ds^2=4\frac{dx^2+dy^2}{(1-(x^2+y^2))^2}.$$ M\"obius transforms
are isometries of $\H^2$, see~\eqref{mobius} below. The associated area measure $\mu$ is isometry-invariant:
$$\mu(dx,dy)=\frac{4}{(1-(x^2+y^2))^2}dxdy.$$
For more
information about hyperbolic geometry, we refer to \cite{flavors}.
Let us now describe the bounded radius version of the
Poisson-Boolean model of continuum percolation. We consider a homogeneous Poisson point process $X$ in $\H^2$, i.e. with intensity measure $\lambda \mu$ where $\lambda\in (0,\infty)$.
Let $C\in
(0,\infty)$ and suppose $R$ is a random variable with $R\in (0,C]$
a.s. Let
$$
\OC:=\bigcup_{x\in X} B(x,R_x)
$$
denote the {\it occupied set}, where $B(x,r)$ denotes the closed
ball of radius $r$ centered at $x$ and $\{R_x\}_{x\in X}$ is a collection of i.i.d. random variables with the same distribution as $R$. Let
$$
\VC:=\overline{\H^2\setminus\OC}.
$$
$\VC$ will be called the {\it vacant set}. It is well known that
both $\OC$ and $\VC$ satisfy the property of \emph{positive
correlations}, see Theorem 2.2 in \cite{meester}.
For $\VC$, this means that for any pair  $f$ and $g$ of bounded increasing functions of $\VC$, we have $\E[f(\VC) g(\VC)]\ge \E[f(\VC)]\E[g(\VC)]$, and the definition for $\OC$ is analogous.

This model has been extensively studied in Euclidean space, see in
particular \cite{Hall} and \cite{meester}.
Aspects of the
model have also been recently studied in hyperbolic space, see \cite{T} and
\cite{BJST}. We will soon mention some of the results in
\cite{BJST}, but first we introduce some notation.

For a set $A\subset \H^2$, let $A^R$ denote the closed
$R$-neighborhood of $A$: $$A^R=\{x\,:\,d(x,A)\le R\}.$$

With $c$ and $c'$ we denote positive constants and their values
may change from place to place, which may only depend on
$\lambda$, the law of $R$, and the parameter $\eps$ which is
introduced in Section~\ref{pfmain}. If they depend on some other
parameter, this is indicated. With  $\Theta(g)$ we denote a
quantity which takes its values between $c g$ and $c' g$. In
addition, we define $\tilde{\Theta}(g)$ in the same way as
$\Theta(g)$, but with condition that $c$ and $c'$ may not depend
on $\lambda$.

Let $L_r(\theta)$ be the geodesic line segment started at $0$ of
length $r$ such that its continuation hits $\partial \H^2$ at the
point $e^{i\theta}$.
For $\theta\in [0,2\pi)$, the visibility in direction $\theta$ is defined as $$V(\theta)=\inf\{r\ge 0\,:\,L_r(\theta)\cap \OC\neq \emptyset\}.$$ The total visibility is defined to be
$${\mathfrak V}=\sup_{\theta\in[0,2\pi)}V(\theta).$$

Let $f(r)=f_{\lambda,R}(r)$ be the probability that a line segment
of length $r$ is contained in $\VC$. Lemma 3.4 in \cite{BJST} says
that there is a unique $\alpha\ge 0$ such that
\begin{equation}\label{deceq}f(r)=\Theta(e^{-\alpha r}),\mbox{ }r\ge
0.\end{equation} The constant $\alpha$ depends on the law of $R$
and on $\lambda$ and it can be computed explicitly, we will come
back to this later. One of the main results in \cite{BJST} was the
following:

\begin{theorem}\label{bjsttheorem}
For the total visibility ${\mathfrak V}$ the following holds:
\begin{equation}
\left\{\begin{array}{ll}
\P[{\mathfrak V}=\infty]=0, & \alpha\ge 1 \\ \P[{\mathfrak V}=\infty]>0,
& \alpha<1
  \end{array} \right.
\end{equation}
\end{theorem}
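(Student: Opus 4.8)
The plan is to translate ``visibility to infinity'' into a covering problem on the boundary circle $\partial\H^2\cong S^1$ and then read off the threshold from a Shepp-type covering criterion. Identify a direction $\theta$ with the ideal point $e^{i\theta}$. The geodesic ray from $o$ toward $e^{i\theta}$ is blocked at a finite distance exactly when it meets some ball $B(x,R_x)$, and the directions blocked by a single ball form a closed arc $A_x\subset S^1$, its shadow seen from $o$; write $\mathcal G:=S^1\setminus\bigcup_{x\in X}A_x$ for the directions of visibility to infinity. Then $\{\mathcal G\neq\emptyset\}\subseteq\{{\mathfrak V}=\infty\}$, and since $\{{\mathfrak V}\ge r\}$ is decreasing in $r$ we have $\P[{\mathfrak V}=\infty]=\lim_{r\to\infty}\P[{\mathfrak V}\ge r]$, so the two regimes amount to deciding whether this limit is zero or positive.

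First I would compute the law of the shadow process. A ball of radius $\rho$ centred at distance $d$ from $o$ subtends a half-angle $\psi$ with $\sin\psi=\sinh\rho/\sinh d$, hence a shadow of angular length $\approx2\sinh\rho/\sinh d$ for large $d$. As $X$ is Poisson of intensity $\lambda\mu$ with an independent i.i.d.\ radius marking, the mapping theorem shows the shadows form a Poisson process of arcs, and rotation invariance of $\mu$ makes their positions uniform. Integrating $\lambda\sinh d\,dd$ over $\{\sinh\rho/(\pi\sinh d)>\ell\}$ and over the law of $R$ gives, after normalising $S^1$ to circumference $1$, a length-intensity $\Lambda(\ell)=\alpha/\ell+O(1)$ as $\ell\to0$, with constant exactly $\alpha=2\lambda\,\E[\sinh R]$. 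This is the same $\alpha$ as in $f(r)=\Theta(e^{-\alpha r})$: since $L_r(\theta)$ is vacant iff no marked point $(x,R_x)$ satisfies $x\in L_r(\theta)^{R_x}$, the Poisson void probability gives $f(r)=\exp(-\lambda\,\E[\mu(L_r(\theta)^R)])$, and the $R$-tube of a length-$r$ segment has area $2r\sinh R+O(1)$, so $\E[\mu(L_r(\theta)^R)]=2r\,\E[\sinh R]+O(1)$. The Poissonian form of Shepp's covering theorem --- $S^1$ is covered a.s.\ iff $\int_0^1\exp\!\big(\int_t^1\Lambda(u)\,du\big)\,dt=\infty$ --- then locates the threshold: with $\int_t^1\Lambda(u)\,du=\alpha\log(1/t)+O(1)$ the integrand is $\Theta(t^{-\alpha})$, so the integral diverges precisely when $\alpha\ge1$, the critical case reducing to $\int_0^1 t^{-1}\,dt=\infty$.

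For the lower bound $\alpha<1$ the Shepp integral converges, so the uncovered set is a.s.\ nonempty. To produce $\P[{\mathfrak V}=\infty]>0$ I would separate the balls near $o$ (which cast arcs comparable to the whole circle and may trap $o$ behind a finite barrier): the balls centred outside $B(o,D+C)$ cast only short arcs but still realise the small-$\ell$ intensity $\alpha/\ell+O(1)$ that governs Shepp's criterion, so their shadows leave a gap a.s.; intersecting this with the independent positive-probability void event $\{X\cap B(o,D+C)=\emptyset\}$, any uncovered direction yields a ray meeting no ball, so ${\mathfrak V}=\infty$. For the upper bound with $\alpha>1$ a first moment suffices: letting $N_n$ count the unit-length arcs on the circle of radius $n$ that are reached by some vacant ray, one has $\E[N_n]=\Theta(e^{(1-\alpha)n})$, which is summable, so Borel--Cantelli gives $N_n=0$ for some $n$ and hence ${\mathfrak V}\le n<\infty$ almost surely.

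The main obstacle is the critical value $\alpha=1$. There the first-moment count is inconclusive, $\E[N_n]=\Theta(1)$, and indeed the expected total arc length $\int_0^1\ell\,|d\Lambda(\ell)|=\alpha\int_0^1 d\ell/\ell$ is infinite for every $\alpha>0$, so no first-moment argument can locate the transition. What is genuinely needed is the quantitative statement $\P[{\mathfrak V}\ge r]\to0$ --- equivalently that the shadows of balls within distance $r$ cover $S^1$ with probability tending to one --- which must be extracted from a sharp covering estimate (controlling the probability of an uncovered gap once arcs shorter than $e^{-r}$ are discarded) rather than from the soft dichotomy $\mathcal G=\emptyset$. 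This quantitative covering rate, polynomial at $\alpha=1$ and exponential for $\alpha>1$, is precisely the content to be established, and second-moment or correlation estimates (aided by the positive-correlation property of $\VC$) are the natural tools.
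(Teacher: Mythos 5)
The paper does not prove this statement; it quotes it from \cite{BJST}, so there is no internal proof to compare against and your proposal must stand on its own. Its core strategy is sound and classical: the reduction to covering $S^1$ by the Poisson process of shadow arcs, the geometry ($\sin\psi=\sinh\rho/\sinh d$), the resulting length-intensity $\Lambda(\ell)=\alpha/\ell+O(1)$ with $\alpha=2\lambda\E[\sinh R]$, and the Poissonian (Fitzsimmons--Fristedt--Shepp) form of Shepp's criterion, which for this intensity gives almost sure covering precisely when $\int_0^1 t^{-\alpha}\,dt=\infty$, i.e.\ $\alpha\ge 1$.

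The genuine gap is that you abandon this strategy one step before it proves the theorem. You record only the inclusion $\{\mathcal G\neq\emptyset\}\subseteq\{{\mathfrak V}=\infty\}$, and, lacking the converse, you declare in your final paragraph that the ``soft dichotomy'' cannot decide the critical case, that one needs the quantitative rate $\P[{\mathfrak V}\ge r]\to 0$, and that this rate ``is precisely the content to be established'' --- in other words the case $\alpha=1$, which is the whole difficulty of the statement, is left unproven. The missing idea is the deterministic reverse inclusion $\{{\mathfrak V}=\infty\}\subseteq\{\mathcal G\neq\emptyset\}$: since $\VC$ is closed, if for every $n$ there is $\theta_n$ with $L_n(\theta_n)\subset\VC$, then any subsequential limit $\theta$ of $(\theta_n)$ satisfies $L_r(\theta)\subset\VC$ for every $r$ (for $n_k\ge r$, each point of $L_r(\theta)$ is a limit of points of the sets $L_r(\theta_{n_k})\subset\VC$), so the full ray in direction $\theta$ avoids every ball and $\theta\in\mathcal G$. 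With $\{{\mathfrak V}=\infty\}=\{\mathcal G\neq\emptyset\}$ in hand, the covering criterion settles both regimes at once: almost sure covering for $\alpha\ge 1$ (including $\alpha=1$) gives $\P[{\mathfrak V}=\infty]=0$ with no rate needed, while for $\alpha<1$ the uncovered set is nonempty with positive probability, which combined with your independent void event near $o$ gives $\P[{\mathfrak V}=\infty]>0$. Note the paper itself implicitly uses this equivalence when it restates the result of \cite{BJST} as ``the set of visible points from $o$ is within some finite random distance.''

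Two secondary inaccuracies are worth flagging. First, your fallback for $\alpha>1$ (first moment plus Borel--Cantelli) asserts $\E[N_n]=\Theta(e^{(1-\alpha)n})$ without justification: the upper bound requires showing that the probability that an angular sector of width $\asymp e^{-n}$ contains \emph{some} vacant ray of length $n$ is $O(f(n))$, which is not a union bound over a continuum of directions --- it is exactly the nontrivial content of Lemma~\ref{tightsecondmoment} and the second-moment computation that the paper develops to prove Theorem~\ref{maintthm1}. Second, for $\alpha<1$ the shadows of the balls centred outside $B(o,D+C)$ leave a gap only with positive probability, not ``a.s.''; covering of a circle by a Poisson family of arcs is not a zero--one event. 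Your argument survives this correction, since independence of the two regions and positivity of both probabilities is all that is needed.
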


We remark that in \cite{BJST}, Theorem~\eqref{bjsttheorem} was formulated in a much more general form. For example, visibility inside $\OC$ was also dealt with.

In \cite{BJST}, the decay of $\P[{\mathfrak V}\ge r]$ as $r\to \infty$ in the case $\alpha\ge 1$ was not studied. One of the main results in this note provides upper and lower bounds as follows:
\begin{theorem}\label{maintthm1}
For all $r$ large enough,
\begin{equation}
\left\{\begin{array}{ll} \P[{\mathfrak V}\ge r]=\Theta(1/r), &
\alpha= 1 \\ \P[{\mathfrak V}\ge r]=\Theta(e^{-(\alpha-1)r}), &
\alpha>1.
  \end{array} \right.
\end{equation}
\end{theorem}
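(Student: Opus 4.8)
The plan is to recognize the total-visibility problem as, in disguise, a question about the survival of a Galton--Watson process, and to read off the two regimes from the classical dichotomy between subcritical and critical branching. The geometric input is that a ball of radius at most $C$ lying at distance $\rho$ from $o$ subtends an angle of order $e^{-\rho}$, and that two geodesic rays issued from $o$ at angular separation $\phi$ stay within bounded distance of one another up to radius $\approx\log(1/\phi)$ and occupy disjoint regions of $\H^2$ beyond it. Tiling each circle of integer radius $n$ into $\Theta(e^{n})$ arcs of angular width $e^{-n}$ (physical width $O(1)$), the arcs visible from $o$ to radius $n$ play the role of the $n$-th generation: each visible arc fans out into $\Theta(1)$ arcs at radius $n+1$, each remaining visible with probability of order $f(1)=\Theta(e^{-\alpha})$ and, up to dependencies, independently. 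The mean offspring number is therefore of order $e^{1-\alpha}$, so the process is subcritical when $\alpha>1$ and critical when $\alpha=1$; this already predicts survival probabilities of order $e^{-(\alpha-1)r}$ and $1/r$ respectively.

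For the lower bound I would make this rigorous by a second-moment (Paley--Zygmund) argument applied to the angular measure $|A_r|$ of the set $A_r=\{\theta:V(\theta)\ge r\}$. Here $\E|A_r|=2\pi f(r)=\Theta(e^{-\alpha r})$, while $\E|A_r|^2=\int\!\int\P[V(\theta_1)\ge r,\,V(\theta_2)\ge r]\,d\theta_1\,d\theta_2$, and I estimate the integrand through the corridor-sharing picture: writing $\phi=|\theta_1-\theta_2|$ and $s=\log(1/\phi)$, the two corridors essentially coincide up to radius $s$ and lie in disjoint, hence independent, regions afterwards, so that $\P[\,\cdot\,]=\Theta\bigl(f(s)f(r-s)^2\bigr)=\Theta(e^{-2\alpha r}\phi^{-\alpha})$ (the upper estimate needed for the method uses only monotonicity of vacancy and independence of the Poisson process on disjoint regions). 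Integrating gives $\E|A_r|^2=\Theta(e^{-(\alpha+1)r})$ for $\alpha>1$ and $\Theta(r\,e^{-2r})$ for $\alpha=1$, whence Paley--Zygmund yields $\P[{\mathfrak V}\ge r]=\P[|A_r|>0]\ge(\E|A_r|)^2/\E|A_r|^2$ of order $e^{-(\alpha-1)r}$ and $1/r$, exactly the two lower bounds.

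For the upper bound the two cases separate. When $\alpha>1$ a crude union bound suffices: discretizing into $\Theta(e^{r})$ directions at angular scale $e^{-r}$ and using that ${\mathfrak V}\ge r$ forces some discretized direction to reach distance $r-O(1)$, one obtains $\P[{\mathfrak V}\ge r]\le\Theta(e^{r})f(r-c)=\Theta(e^{-(\alpha-1)r})$. The critical case $\alpha=1$ is the genuine difficulty: the union bound returns only $\Theta(1)$, and since $|A_r|$ has no smallest positive value no Markov bound on $\P[|A_r|>0]$ is available, so the $1/r$ ceiling must come from the branching structure itself. The clean rigorous route is a recursive inequality for $g(r)=\P[{\mathfrak V}\ge r]$ of the critical Galton--Watson type, namely $g(r+1)\le g(r)-c\,g(r)^2$ for all large $r$, which integrates to $g(r)=O(1/r)$. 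The quadratic correction is the honest form of Kolmogorov's survival estimate: it reflects that two distinct visible corridors at radius $r$ typically share balls, so that conditioning on one visible corridor strictly depresses the chance of a second, the negative feedback that prevents supercritical behaviour when $\alpha=1$.

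The main obstacle is precisely establishing this quadratic correction. One must lower-bound the ``overlap cost'' $c\,g(r)^2$ while controlling the dependencies between nearby corridors, whose shared balls near common boundaries create positive correlations pushing against an upper bound. Equivalently, this is the assertion that the visibility cascade is exactly critical, with mean offspring equal to $1$ and strictly positive variance, at the threshold $\alpha=1$; pinning the mean to $1$ rather than merely to $\Theta(1)$ is what ties the combinatorial branching constant to the analytic threshold in Theorem~\ref{bjsttheorem}. I expect this matching, together with a renormalization that groups arcs into well-separated blocks with disjoint $C$-neighbourhoods (so the induced offspring are conditionally independent and uniformly bounded, at the cost of harmless constants), to be where the real work lies. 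Once the recursion is in place, the classical critical and subcritical branching asymptotics deliver Theorem~\ref{maintthm1}.
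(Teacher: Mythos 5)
Your branching heuristic and your lower bound are essentially the paper's: the corridor estimate $\P[V(\theta_1)\ge r,\,V(\theta_2)\ge r]=\Theta\bigl(f(s)f(r-s)^2\bigr)$ with $s=\log(1/|\theta_1-\theta_2|)$ is exactly the paper's computation leading to~\eqref{est5} and~\eqref{andramomentfinal}, and Paley--Zygmund then gives both lower bounds. The genuine gaps are in your upper bounds, in both regimes. For $\alpha>1$, your union bound rests on the claim that ${\mathfrak V}\ge r$ forces some discretized direction to reach distance $r-O(1)$; this is false, because visibility does not transfer to a nearby ray --- a ball can graze $L_r(\theta_i)$ while missing $L_r(\theta)$. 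What is true is that the nearby discretized ray avoids all obstacles \emph{shrunk} by the transverse displacement $d$ between the two rays, and that event decays at rate $\alpha(d)=2\lambda\E[\sinh((R-d)^+)]$, which is strictly less than $\alpha$ for any fixed $d>0$. With $\Theta(e^r/d)$ directions at transverse scale $d$ the union bound gives $\Theta(e^r/d)\,e^{-(\alpha-\Theta(d))r}$, and optimizing $d\sim 1/r$ yields only $O(r\,e^{-(\alpha-1)r})$ --- off by a factor $r$ from the claimed $\Theta(e^{-(\alpha-1)r})$, so the crude union bound does not suffice. For $\alpha=1$ you correctly identify the Kolmogorov-type recursion $g(r+1)\le g(r)-c\,g(r)^2$ as the crux, but you do not prove it and explicitly defer it as ``where the real work lies''; so the critical upper bound is missing entirely, and it is not clear the recursion is any easier to establish than the result itself.

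The paper closes both gaps at once with a single reversed second-moment inequality (Lemma~\ref{tightsecondmoment}): $\P[Y_r(\eps)\neq\emptyset]\le 4\,\E[y_r(\eps)]^2/\E[y_r(\eps)^2]$, i.e.\ the second-moment method is tight up to a factor $4$. Granted this, the computation you already did for the lower bound immediately delivers the matching upper bounds in both regimes, with no case distinction. The proof of that lemma (following Kahane and Janson) is where the Poisson structure enters: discretize directions at scale $1/N$ and condition on the \emph{first} visible discretized direction $j/N$; given $\{j/N\in Y_r\}$, the event that all earlier directions are blocked is determined by the Poisson points whose balls hit $\cup_{i<j}L_r(i/N)$ but not $L_r(j/N)$, while the event that a further direction $\theta$ is visible is determined by the points whose balls hit $L_r(\theta)$ but not $L_r(j/N)$ --- disjoint sets of points of a Poisson process, hence conditionally independent. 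This yields $\E[y_r(2\eps)]\ge\P[Y_r(\eps)\neq\emptyset]\,\E[y_r(\eps)\mid 0\in Y_r]$, from which the reversed inequality follows. If you want to salvage your outline, this conditional-independence step is the ingredient to aim for in place of both the union bound and the branching recursion; it is also precisely the step that fails for visibility inside the occupied set $\OC$, which is why the paper only obtains lower bounds in that setting.
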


\subsection{The value of $\alpha$}

If $R$ is non-random, then Lemma 4.2 in \cite{BJST} says that
$\alpha=2\lambda\sinh(R)$. We can easily generalize this to the
case when $R$ is random. For the convenience of the reader, we
include the proof. For this particular result, we do not need the
$R$ to be bounded.

\begin{lemma}\label{alphalemma}
If $R$ is random with $\E[e^R]<\infty$, then the value of $\alpha$
is given by
$$\alpha=2\lambda\E[\sinh(R)].$$
\end{lemma}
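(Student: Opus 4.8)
The plan is to compute $f(r)$ exactly via the void probability of the marked Poisson process and then read off the exponential rate. First, mark each point $x\in X$ with its radius $R_x$, so that $\OC=\bigcup_{x}B(x,R_x)$. A geodesic segment $L$ of length $r$ avoids $\OC$ precisely when no ball hits it, and $B(x,R_x)\cap L\neq\emptyset$ iff $d(x,L)\le R_x$, i.e.\ $x\in L^{R_x}$. By the marking theorem for Poisson processes, the number of balls meeting $L$ is Poisson with mean $\lambda\int_{\H^2}\P[d(x,L)\le R]\,\mu(dx)=\lambda\,\E[\mu(L^R)]$, where the last equality is Fubini. Hence
\begin{equation*}
f(r)=\exp\bigl(-\lambda\,\E[\mu(L^R)]\bigr).
\end{equation*}
By isometry-invariance of $\mu$ and of $X$ this depends only on $r$, not on the placement of $L$.

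The geometric core is to evaluate $\mu(L^R)$ for a fixed radius and segment length $r$. Working in Fermi coordinates $(t,\rho)$ along the geodesic carrying $L$, where $t$ is arclength and $\rho$ the signed distance to the geodesic, the area element is $\cosh\rho\,dt\,d\rho$. The neighborhood splits into a tube over the segment and two half-disc caps at the endpoints. The tube contributes $\int_0^r\!\int_{-R}^{R}\cosh\rho\,d\rho\,dt=2r\sinh R$, while the two endpoint half-discs together form one hyperbolic disc of radius $R$, of area $2\pi(\cosh R-1)$. Thus
\begin{equation*}
\mu(L^R)=2r\sinh R+2\pi(\cosh R-1).
\end{equation*}

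Taking expectations (finite since $\sinh R,\cosh R\le e^R$ and $\E[e^R]<\infty$) and substituting gives
\begin{equation*}
f(r)=e^{-2\pi\lambda\E[\cosh R-1]}\,e^{-2\lambda\E[\sinh R]\,r},
\end{equation*}
which is a constant times $e^{-2\lambda\E[\sinh R]\,r}$; in particular $f(r)=\Theta(e^{-2\lambda\E[\sinh R]\,r})$. Comparing with \eqref{deceq} and invoking the uniqueness of $\alpha$ asserted there yields $\alpha=2\lambda\E[\sinh R]$.

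The only real obstacle is getting the neighborhood area correct — in particular accounting for the endpoint caps rather than just the tube, and justifying the interchange of expectation and integration when $R$ is unbounded. The hypothesis $\E[e^R]<\infty$ is exactly what makes $\E[\mu(L^R)]$ finite and the marking computation valid; note that the additive constant $2\pi(\cosh R-1)$ only affects the $\Theta$-prefactor and plays no role in determining $\alpha$.
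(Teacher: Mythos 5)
Your proof is correct and follows essentially the same route as the paper: both compute $f(r)$ as the void probability of the thinned (marked) Poisson process of balls hitting the segment, use Fubini to write the exponent as $\lambda\,\E[\mu(L^R)]$, and read off $\alpha$ from the coefficient of $r$. The only difference is that you evaluate $\mu(L^R)=2r\sinh R+2\pi(\cosh R-1)$ yourself via Fermi coordinates (correctly, tube plus endpoint half-discs), whereas the paper cites this calculation from Lemma 4.2 of \cite{BJST}.
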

\begin{proof}
Let $\gamma\,:\,\R\rightarrow \H^2$ be a line parameterized by
arclength and let $r>0$. Let $\tilde{X}\subset X$ be the set of
Poisson points $x\in X$ for which $B(x,R_x)\cap
\gamma[0,r]\neq\emptyset$. If a Poisson point is at distance $t$
from $\gamma[0,r]$, the probability that its corresponding ball
intersects $\gamma[0,r]$ is equal to $\P[R\ge t]$. Therefore,
$\tilde{X}$ is a non-homogeneous Poisson point process with
intensity function $\Lambda(x)=\lambda \P[R\ge d(x,\gamma[0,r])]$.
That is, for any measurable $A\subset \H^2$,
\begin{equation}\P[|\tilde{X}\cap A|=k]=e^{-\int_A
\Lambda(x)\,d\mu(x)}\frac{\left(\int_A
\Lambda(x)\,dx\right)^k}{k!}.\end{equation}
\end{proof}
Observe that $\gamma[0,r]\subset\VC$ if and only if
$\tilde{X}=\emptyset$. Consequently, using Fubini,
\begin{equation*}
\begin{split}
f(r)&=\P[|\tilde{X}|=0]=e^{-\int_{\H^2}\Lambda(x)\,d\mu(x)}=e^{-\lambda\int_{\H^2}
\P[R\ge d(x,\gamma[0,r])]\,d\mu(x)}\\&=e^{-\lambda\int_{\H^2}\int
{\bf 1}\{R\ge d(x,\gamma[0,r])\}\,d\P \,d\mu(x)}=e^{-\lambda\int
\int_{\H^2}{\bf 1}\{R\ge d(x,\gamma[0,r])\}
\,d\mu(x)\,d\P}\\&=e^{-\lambda\int\mu(\gamma[0,r]^R)\,d\P}=e^{-\lambda\E[\mu(\gamma[0,r]^R)]}=e^{-\lambda\E[2\pi(\cosh(R)-1)+2r\sinh(R)]},
\end{split}
\end{equation*}
and the result follows. In the last equality, we used the calculation in the proof of Lemma 4.2 in \cite{BJST}.
\qed
\section{Proof of Theorem~\ref{maintthm1}}\label{pfmain}
We now turn to the proof of Theorem~\ref{maintthm1}. First we introduce some additional notation.
For $\eps,\delta\in [0,2\pi)$ let
$Y_r(\eps,\delta)$ be the set
$\{\theta\in[\eps,\delta]\,:\,L_r(\theta)\subset \VC\}$. Note that
a.s., $Y_r(\eps,\delta)$ is a union of intervals. Let
$y_r(\eps,\delta):=\length(Y_r(\eps,\delta))$.
Also put $Y_r(\eps)=Y_r(0,\eps)$, $y_r(\eps)=y_r(0,\eps)$,
$Y_r:=Y_r(2\pi)$ and $y_r:=y_r(2\pi)$.
Recall that $f(r)$ is the probability that a line segment of
length $r$ is contained in $\VC$. Since the law of $\VC$ is
invariant under isometries of $\H^2$, we have $f(r)=\P[\theta\in
Y_r]$, for every $\theta\in[0,2\pi)$. For $x,y\in\H^2$, let
$[x,y]$ be the line-segment between $x$ and $y$ and for $s>0$ let
$[x,y]_s$ be the union of all line-segments with one end-point in
$B(x,s)$ and the other end-point in $B(y,s)$. Let $Q(x,y,s)$ be
the event that $[x,y]_s\subset \VC$.

Clearly, \begin{equation}\label{obveq}f(d(x,y))\ge \P[Q(x,y,s)].\end{equation} However, from Lemma 3.3 in
\cite{BJST}, we have that there exists some $c_1>0$ such that for
all small enough $s$ and all $x,y\in \H^2$,
\begin{equation}\label{comparable}
 \P[Q(x,y,s)]\ge c_1 f(d(x,y)).
\end{equation}
If $R$ is fixed and one considers only intensities $\lambda$
within some compact interval, then $c_1$ can be chosen to be the
same for all those values of $\lambda$, and we will make use of
this later. In fact, Lemma 3.3 in \cite{BJST} states
relation~\eqref{comparable} for a larger class of random sets. We
can remove the condition that $s$ is small enough, as we will see
in the next lemma.

\begin{lemma}\label{interm}
For any $s\in(o,\infty)$ there is $c(s)>0$ such that for all $x,y\in \H^2$,
\begin{equation}\label{allseq}
f(d(x,y))\le c(s)\,\P[Q(x,y,s)].
\end{equation}
\end{lemma}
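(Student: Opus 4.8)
The plan is to leverage the already-established comparison~\eqref{comparable}, which handles small $s$, and bootstrap it to arbitrary $s\in(0,\infty)$ by a chaining argument along the geodesic segment $[x,y]$. The obstacle is purely that $s$ may be large: the ``tube'' $[x,y]_s$ of line-segments joining $B(x,s)$ to $B(y,s)$ is wide, and a single application of Lemma 3.3 from \cite{BJST} is not licensed. To get around this, I would fix once and for all a value $s_0>0$ that is small enough for~\eqref{comparable} to apply, and reduce the general case to the small-$s_0$ case.

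First I would observe that it suffices to prove~\eqref{allseq} for $d(x,y)$ large, since for bounded $d(x,y)$ the right-hand side $\P[Q(x,y,s)]$ is bounded below by a positive constant depending on $s$ (the tube $[x,y]_s$ is a fixed bounded region, and the probability that a bounded region lies in $\VC$ is positive), while the left-hand side $f(d(x,y))$ is at most $1$; absorbing this into $c(s)$ disposes of the bounded range. Then, for the main range, the key geometric step is to cover the tube $[x,y]_s$ by a bounded number (bounded in terms of $s/s_0$, hence a constant once $s$ is fixed) of overlapping small tubes of the form $[x_i,x_{i+1}]_{s_0}$, where $x=x_0,x_1,\dots,x_n=y$ are equally spaced points along the geodesic and $n=\Theta(d(x,y))$. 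The event $Q(x,y,s)$ should be comparable to the intersection of the events $Q(x_i,x_{i+1},s_0)$ together with a constant number of ``junction'' events ensuring the small tubes glue up to cover the fat tube; each junction is again a fixed bounded region lying in $\VC$, contributing only a constant factor.

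The analytic engine is positive correlations for $\VC$ (stated in the excerpt via Theorem 2.2 of \cite{meester}): since each event $Q(x_i,x_{i+1},s_0)=\{[x_i,x_{i+1}]_{s_0}\subset\VC\}$ is a decreasing event in $\OC$, equivalently increasing in $\VC$, the FKG inequality gives
\begin{equation}\label{fkgchain}
\P[Q(x,y,s)]\ge \prod_{i=0}^{n-1}\P[Q(x_i,x_{i+1},s_0)]\ge c_1^n\,\prod_{i=0}^{n-1} f(d(x_i,x_{i+1})),
\end{equation}
where the second inequality is~\eqref{comparable} applied to each small tube (all the $d(x_i,x_{i+1})$ equal the fixed spacing $\le s_0$, so each factor $f(d(x_i,x_{i+1}))$ is a positive constant). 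Hence $\P[Q(x,y,s)]\ge (c_1 c_2)^n$ for a constant $c_2>0$, i.e.\ the right-hand side decays only exponentially in $n=\Theta(d(x,y))$. Finally I would match this against the upper bound~\eqref{deceq}, $f(d(x,y))=\Theta(e^{-\alpha d(x,y)})$, which also decays exponentially in $d(x,y)$; choosing the spacing small enough that the exponential rate $n\log(1/(c_1c_2))$ of the lower bound does not exceed $\alpha\,d(x,y)$ yields $f(d(x,y))\le c(s)\,\P[Q(x,y,s)]$, which is precisely~\eqref{allseq}. The hard part is calibrating the number of links $n$ (equivalently the spacing) so the accumulated constant factor $c_1^n$ is still dominated by the genuine decay of $f$; this is where one must be careful that the per-link loss from~\eqref{comparable} stays under control relative to $\alpha$.
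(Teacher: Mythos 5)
The chaining step is where your proof breaks down, and the failure is quantitative, not cosmetic. With $n=\Theta(d(x,y))$ links, each application of FKG together with~\eqref{comparable} costs a fixed multiplicative constant: writing $\delta$ for the spacing, your bound reads $\P[Q(x,y,s)]\ge (c_1 f(\delta))^n$, and since $f(\delta)=\Theta(e^{-\alpha\delta})$ with implied constants independent of $\delta$, this gives $\P[Q(x,y,s)]\ge c^n e^{-\alpha d(x,y)}$ for a fixed $c=c(s_0,\delta)<1$. The accumulated loss $c^{-n}=e^{\eta d(x,y)}$ is exponential in $d(x,y)$ with $\eta=\delta^{-1}\log(1/c)>0$; taking the spacing large makes $\eta$ small but never zero, and taking it small (as you in fact propose, spacing $\le s_0$) makes it worse, so the calibration you hope for goes in the wrong direction. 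Indeed the rate of your lower bound is $\delta^{-1}\log(1/c_1)+\delta^{-1}\log(1/f(\delta))\ge \delta^{-1}\log(1/c_1)+\alpha$, which strictly exceeds $\alpha$ for every admissible spacing. Consequently your argument only yields $f(d(x,y))\le c(s)\,e^{\eta d(x,y)}\,\P[Q(x,y,s)]$, which is strictly weaker than~\eqref{allseq}: the lemma demands comparison up to a constant, and an error factor $e^{\eta d}$ cannot be absorbed no matter how small $\eta$ is, so matching exponents against~\eqref{deceq} does not save the argument.

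The idea you are missing is that no chaining is needed, for two reasons. First,~\eqref{comparable} is valid for \emph{all} pairs $x,y\in\H^2$ at once --- the smallness restriction is only on $s$, not on $d(x,y)$ --- so it can be applied in a single shot to a thin tube of arbitrary length. Second, by a hyperbolic-geometry fact (Lemma 3.2 in \cite{BJST}, which is what the paper's proof invokes), the fat tube $[x,y]_s$ pinches to within distance $s'$ of the geodesic after a bounded distance $c'(s)$ from each endpoint, where $c'(s)$ does not depend on $d(x,y)$. This gives a decomposition of $Q(x,y,s)$ into just \emph{three} events: two end caps of bounded size and one long thin tube of width $s'$, so FKG is used only twice and~\eqref{comparable} only once, and the total loss is a genuine constant, namely $c_1\P[Q(\gamma(0),\gamma(c'(s)),s)]^2$. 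Your remark about ``junction regions'' gestures at this geometry, but the error budget in this problem is set by the number of gluing steps, and that number must be $O(1)$, not $\Theta(d(x,y))$.
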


\begin{proof}
First we fix $s'>0$ so small that~\eqref{comparable} holds with $s'$ in place of $s$. Let $s\in(s',\infty)$. Let $\gamma$ be a line parameterized by arc-length. Fix $r>0$ large, and let $$t_1:=\inf\{t\,:\,d(\gamma(t),\partial [\gamma(0),\gamma(r)]_s)<s'\},$$
From Lemma 3.2 in \cite{BJST}, we get that for each $s>0$, there is some $c'(s)<\infty$ which is independent of $r$ such that $t_1<c'(s)$. In particular, $t_1$ does not diverge with $r$. Observe that by definition of $t_1$, \begin{multline}Q(\gamma(0),\gamma(r),s)\\ \supset Q(\gamma(0),\gamma(t_1),s)\cap Q(\gamma(t_1),\gamma(r-t_1),s')\cap Q(\gamma(r-t_1),\gamma(r),s).\end{multline} By positive correlations and invariance, we get that \begin{align}\label{e:qcomp}\P[Q(\gamma(0),\gamma(r),s)]\ge \P[Q(\gamma(0),\gamma(t_1),s)]^2 \P[ Q(\gamma(t_1),\gamma(r-t_1),s')]. \end{align} We have \begin{equation}\label{qequa1}\P[Q(\gamma(t_1),\gamma(r-t_1),s')]\ge \P[Q(\gamma(0),\gamma(r),s')]\stackrel{~\eqref{comparable}}{\ge} c_1 f(r)\end{equation} and \begin{equation}\label{qequa2}\P[Q(\gamma(0),\gamma(t_1),s)]^2\ge \P[Q(\gamma(0),\gamma(c'(s)),s)]^2.\end{equation}
We now deduce~\eqref{allseq} with $c(s)=c_1 \P[Q(\gamma(0),\gamma(c'(s)),s)]^2$ from~\eqref{e:qcomp}, ~\eqref{qequa1} and~\eqref{qequa2}.\qed
\end{proof}

Equations~\eqref{allseq} and~\eqref{obveq} together imply that for all $x,y\in \H^2$, \begin{equation}\label{fqcomp}\P[Q(x,y,s)]=\Theta(f(d(x,y))),\end{equation} where the implied constants in this case are allowed to depend on $s$.  Theorem \ref{maintthm1} is equivalent to the following estimate:
\begin{equation}\label{mainstat1}
\P[Y_r\neq \emptyset]=\left\{\begin{array}{ll}
\Theta\left(e^{-(\alpha-1)r}\right), & \alpha>1 \\ \Theta\left(r^{-1}\right)
& \alpha=1
  \end{array} \right.
\end{equation}
Recall that in \cite{BJST}, it is shown that in the case
$\alpha<1$, there is positive probability that there are infinite
rays contained in $\VC$ emanating from $0$, so that $\P[Y_r\neq
\emptyset]$ does not converge to $0$. We will make further remarks about the region $\alpha<1$ later. Observe that by Fubini, we
have
\begin{equation}\label{fubini1}\E[y_r(\eps)]=\eps\,\P[0\in
Y_r]=\eps\,f(r)\end{equation} and
\begin{equation}\label{fubini2}\E[y_r(\eps)^2]=\int_0^\eps
\int_0^\eps \P[\theta\in Y_r,\theta'\in
Y_r]\,d\theta\,d\theta'.\end{equation} Moreover, by invariance it
follows that $$\P[\theta\in Y_r,\theta'\in Y_r]=\P[0\in
Y_r,|\theta-\theta'|\in Y_r].$$ Therefore, we have

\begin{multline}\label{secondmomentest}\frac{\eps}{2}\int_{0}^{\eps/2}\P[0\in
Y_r,\theta\in Y_r]\,d\theta\le \int_0^\eps \int_0^\eps
\P[\theta\in Y_r,\theta'\in Y_r]\,d\theta\,d\theta'\\ \le
2\,\eps\int_{0}^{\eps}\P[0\in Y_r,\theta\in Y_r]\,d\theta.
\end{multline} Denote by $J(r,\theta)$ the set $L_r(0)\cup L_r(\theta)$. Note that $$\P[0\in Y_r,\theta\in Y_r]=e^{-\lambda\E[\area(J(r,\theta)^R)]}.$$ Since the area of
$J(r,\theta)^R$ is increasing in $\theta$ on $[0,\pi]$, it follows
that $\P[0\in Y_r,\theta\in Y_r]$ is decreasing in $\theta$
on $[0,\pi]$ and therefore we have
\begin{equation}\label{secondmomentest1}\int_{0}^{\eps/2}\P[0\in
Y_r,\theta\in Y_r]\,d\theta\ge \frac{1}{2}\int_{0}^{\eps}\P[0\in
Y_r,\theta\in Y_r]\,d\theta.\end{equation} Observe that up to a
set of measure $0$, the events $\{y_r>0\}$ and
$\{Y_r\neq\emptyset\}$ are the same. The following lemma is the
key ingredient for the proof of Theorem \ref{maintthm1}.

\begin{lemma}\label{tightsecondmoment}
For every $r>0$ and every
$\eps\in(0,\pi/2)$,
\begin{equation}\label{andramoment}
 \frac{\E[y_r(\eps)]^2}{\E[y_r(\eps)^2]}\le \P[Y_r(\eps)\neq\emptyset]\le 4 \frac{\E[y_r(\eps)]^2}{\E[y_r(\eps)^2]}.
\end{equation}
\end{lemma}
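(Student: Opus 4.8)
The plan is to prove both inequalities by the second moment method applied to the nonnegative random variable $Z:=y_r(\eps)$. Since, up to a null event, the events $\{Z>0\}$ and $\{Y_r(\eps)\neq\emptyset\}$ coincide, it suffices to compare $\P[Z>0]$ with the ratio $\E[Z]^2/\E[Z^2]$, both moments being already available in closed form through \eqref{fubini1} and \eqref{fubini2}.

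For the lower bound I would simply apply Cauchy--Schwarz to the decomposition $\E[Z]=\E[Z\,\mathbf 1_{\{Z>0\}}]$, obtaining $\E[Z]\le \E[Z^2]^{1/2}\,\P[Z>0]^{1/2}$; squaring and rearranging yields $\P[Z>0]\ge \E[Z]^2/\E[Z^2]$. This direction uses no geometry beyond measurability of $Z$, and I expect it to be routine.

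The upper bound is the substantive direction. I would first observe that it is equivalent to the conditional second moment estimate
\[
\E\bigl[Z^2\mid Y_r(\eps)\neq\emptyset\bigr]\le 4\,\E\bigl[Z\mid Y_r(\eps)\neq\emptyset\bigr]^2,
\]
i.e. to $\E[Z^2]\,\P[Z>0]\le 4\,\E[Z]^2$; in words, conditioning on the existence of a visible direction must not make the total visible length atypically spread out. To control $\E[Z^2]$ I would use invariance to write $\P[\theta\in Y_r,\theta'\in Y_r]=\P[0\in Y_r,|\theta-\theta'|\in Y_r]$ together with the fact that this two--point function is decreasing on $[0,\pi]$; here the hypothesis $\eps<\pi/2$ guarantees that every argument $|\theta-\theta'|$ that occurs stays in the monotone regime. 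This is exactly the input behind \eqref{secondmomentest} and \eqref{secondmomentest1}, which reduce the double integral \eqref{fubini2} to the single integral $\int_0^{\eps}\P[0\in Y_r,\theta\in Y_r]\,d\theta$ and, combined with \eqref{fubini1}, pin down both moments in terms of this one quantity; the factor $2$ produced in the upper estimate of \eqref{secondmomentest} is one source of the eventual constant $4$.

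The remaining and hardest step is then to bound $\P[Y_r(\eps)\neq\emptyset]$ from above by a constant multiple of $f(r)^2\big/\!\int_0^{\eps}\P[0\in Y_r,\theta\in Y_r]\,d\theta$. I would obtain this by conditioning on a distinguished visible direction---for instance $\inf Y_r(\eps)$ on the event that $Y_r(\eps)$ is nonempty---and using the positive correlations of $\VC$ to show that, once one direction is visible, a whole bundle of neighbouring directions of comparable expected length is visible as well, so that a nonempty visible set cannot be too short. The delicate point, and the main obstacle, is that conditioning on the distinguished direction simultaneously forbids visibility on one side, which opposes the positive correlations; handling this competition carefully, while tracking the constants it generates, is precisely what produces the factor $4$.
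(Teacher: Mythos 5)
Your lower bound and your reduction of the moments via \eqref{fubini1}, \eqref{secondmomentest}, \eqref{secondmomentest1} match the paper, and you have correctly identified the crux of the upper bound: conditioning on a distinguished (leftmost) visible direction carries the negative information that nothing to its left is visible. But your proposal stops exactly at that obstacle without an idea that overcomes it, and the tool you name, positive correlations, points the wrong way. The event ``all earlier directions are blocked'' is decreasing (as a function of $\VC$), while $\{\theta\in Y_r\}$ is increasing, so FKG yields
$\P[\theta\in Y_r\mid \tilde A\cap\{j/N\in Y_r\}]\le \P[\theta\in Y_r\mid j/N\in Y_r]$,
whereas the argument needs the reverse inequality. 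No amount of ``careful tracking of constants'' extracts the needed bound from positive correlations; the factor $4$ in the paper comes from the benign doublings in \eqref{andramoment1}--\eqref{andramoment5} and in \eqref{secondmomentest}, not from taming this competition.

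What actually closes the gap in the paper is an \emph{exact conditional independence}, special to the vacant set, proved after a Kesten-style discretization (grid points $j/N$, events $A_j$, then $N\to\infty$ -- this also sidesteps the measure-theoretic awkwardness of conditioning on $\inf Y_r(\eps)$ directly). Decompose the Poisson points into $X_1$ (balls hitting $L_r(j/N)$), $X_2$ (balls missing $L_r(j/N)$ but hitting $L_r(\theta)$), and $X_3$ (balls missing $L_r(j/N)$ but hitting some earlier ray $L_r(i/N)$, $i<j$). By convexity of balls and of hyperbolic discs, a ball meeting rays on both angular sides of $L_r(j/N)$ within distance $r$ of the origin must meet $L_r(j/N)$ itself, so $X_2$ and $X_3$ live on disjoint regions of the marked point space and are independent. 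Hence, on $\{j/N\in Y_r\}=\{X_1=\emptyset\}$, the blocking event to the left (determined by $X_3$) and visibility at $\theta$ (determined by $X_2$) are conditionally independent, giving the \emph{equality} \eqref{andramoment3} with no constant loss. That this is not a routine correlation argument is underscored by the paper's own remark on the covered set ${\mathcal C}$, where precisely this step is unknown and only the lower bound of the lemma survives. As written, your proposal therefore has a genuine gap at its central step.
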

For the proof of Lemma \ref{tightsecondmoment} we will use some
techniques from \cite{J} and \cite{K}.
\begin{proof}
The lower bound is of course the usual second moment method, so it
remains to show the upper bound. The first part of the proof of
the upper bound follows the method in the proof of the Lemma on
page 146 of \cite{K}. Fix some $\eps\in(0,\pi/2)$. Let
$A=A(r,\eps)$ be the event that $Y_r(\eps)\neq\emptyset$. First we
show that
\begin{equation}\label{andramoment1}
\E[y_r(2 \eps)]\ge \P[A]\E[y_r(\eps)|0\in Y_r(\eps)],
\end{equation}
and then we deduce~\eqref{andramoment} from~\eqref{andramoment1}.
Let $A_N=A_N(r,\eps)$ be the event that $Y_r(\eps)$ contains an
interval of length at least $1/N$. Then clearly
$\P[A_N]\nearrow\P[A]$ as $N\nearrow\infty.$ Fix an integer $N$.
Let $A_0:=\{0\in Y_r\}$ and for $j=1,...,[N\eps]$ let
$$A_j:=\{0\in Y_r^c,1/N\in Y_r^c,...,(j-1)/N\in Y_r^c,j/N\in
Y_r\}.$$ On $A_N$, exactly one of the events $A_j$ happens. We
first argue that
\begin{equation}\label{andramoment2}\E[y_r(2\eps){\bf 1}_{A_j}]\ge\E[y_r(j/N,j/N+\eps){\bf
1}_{A_j}]\ge\P[A_j]\E[y_r(\eps)|0\in Y_r].\end{equation} The left
inequality is trivial. After division by $\P[A_j]$ we see that we
need to show that
$$\E[y_r(j/N,j/N+\eps)|A_j]\ge \E[y_r(\eps)|0\in
Y_r].$$ By invariance, the right hand side equals
$$\E[y_r(j/N,j/N+\eps)|j/N\in Y_r].$$ Thus it will suffice to
show that for each $\theta\in [j/N,j/N+\eps]$,
\begin{equation}\label{andramoment3}\P[\theta\in Y_r| A_j]= \P[\theta\in Y_r|j/N\in
Y_r].\end{equation} So fix some $\theta\in [j/N,j/N+\eps]$. We can
write
\begin{equation}\label{decomp}
X=\bigcup_{i=1}^4 X_i,
\end{equation}
where
\begin{equation}\label{decomp1}
X_1=\{x\in X\,:\,B(x,R_x)\cap L_r(j/N)\neq\emptyset\}
\end{equation}
\begin{equation}\label{decomp2}
X_2=\{x\in X\,:\,B(x,R_x)\cap L_r(j/N)=\emptyset,\,B(x,R_x)\cap
L_r(\theta)\neq\emptyset\}
\end{equation}
\begin{equation}\label{decomp3}
X_3=\{x\in X\,:\,B(x,R_x)\cap L_r(j/N)=\emptyset,\,B(x,R_x)\cap
\left(\cup_{i=0}^{j-1}L_r(i/N)\right)\neq\emptyset\}
\end{equation}
\begin{equation}\label{decomp4}
X_4=X\setminus\bigcup_{i=1}^3 X_i.
\end{equation}
 Note that $\{j/N \in
Y_r\}=\{X_1=\emptyset\}$. Therefore, given that the event $\{j/N
\in Y_r\}$ happens, the event $\{\theta\in Y_r\}$ is determined by
$X_2$, and the event
$$\tilde{A}:=\{0\in Y_r^c,1/N\in Y_r^c,...,(j-1)/N\in Y_r^c\}$$ is
determined by $X_3$. Therefore, conditioned on $\{j/N\in Y_r\}$,
the events $\tilde{A}$ and $\{\theta\in Y_r\}$ are conditionally
independent, that is
$$\P[\tilde{A}\cap \{\theta\in Y_r\}|j/N\in
Y_r]=\P[\tilde{A}|j/N\in Y_r]\P[ \theta\in Y_r|j/N\in Y_r].$$ This
implies that
$$\P[\theta\in Y_r|\tilde{A}\cap \{j/N\in
Y_r\}]=\P[\theta\in Y_r|j/N\in Y_r]$$ which is the same
as~\eqref{andramoment3} and therefore~\eqref{andramoment2} is
established. After summing both sides of~\eqref{andramoment2}, we get
\begin{equation}\label{andramoment4}\E[y_r(2\eps)]\ge\P[A_N]\E[y_r(\eps)|0\in Y_r].\end{equation}

Letting $N\to \infty$ in~\eqref{andramoment4}
establishes~\eqref{andramoment1}. We next show, in a similar way
as is done in \cite{J}, that
\begin{equation}\label{andramoment5}\E[y_r(\eps)|Y_r(\eps)\neq\emptyset]\ge\frac{1}{2}\E[y_r(\eps)|0\in
Y_r].\end{equation} This follows from
\begin{multline*}\E[y_r(\eps)|Y_r(\eps)\neq\emptyset]=\frac{\E[y_r(\eps)]}{\P[Y_r(\eps)\neq\emptyset]}=\frac{\E[y_r(2\eps)]/2}{\P[Y_r(\eps)\neq\emptyset]}\\ \overset{~\eqref{andramoment1}}\ge
\frac{\P[Y_r(\eps)\neq\emptyset]\E[y_r(\eps)|0\in
Y_r]/2}{\P[Y_r(\eps)\neq\emptyset]}=\frac{1}{2}\E[y_r(\eps)|0\in
Y_r],\end{multline*} where the second equality follows from
invariance. We can now derive the upper bound
in~\eqref{andramoment}:
\begin{multline*}
\P[Y_r(\eps)\neq\emptyset]=\frac{\E[y_r(\eps)]}{\E[y_r(\eps)|Y_r(\eps)\neq\emptyset]}\overset{~\eqref{andramoment5}}\le \frac{2 \E[y_r(\eps)]}{\E[y_r(\eps)|0\in Y_r]}\\=\frac{2 \E[y_r(\eps)]}{\int_0^{\eps}\P[\theta\in Y_r|0\in Y_r]\,d\theta}=\frac{2 \E[y_r(\eps)]\P[0\in Y_r]}{\int_0^{\eps}\P[\theta\in Y_r,0\in Y_r]\,d\theta}\overset{~\eqref{fubini1},~\eqref{secondmomentest}}\le 4\frac{\E[y_r(\eps)]^2}{\E[y_r(\eps)^2]},
\end{multline*}concluding the proof of the lemma.\qed
\end{proof}
{\bf Proof of Theorem \ref{maintthm1}.} In view of Lemma
\ref{tightsecondmoment}, we need to estimate $\E[y_r(\eps)^2]$.
First we estimate $\P[0\in Y_r,\theta \in Y_r]$ for
$\theta\in(0,\eps]$ and $r>0$.  We have $$ \P[0\in Y_r,\theta \in
Y_r]=\P[J(r,\theta)\subset \VC].$$ Let
$$t_\theta:=\inf\{t\,:\,d(L_\infty(\theta)\setminus
L_t(\theta),L_\infty(0))\ge 2C\}.$$ That is, if a point $x\in
L_\infty(\theta)$ is at distance more than $t_\theta$ from the
origin, the distance from $x$ to $L_\infty(0)$ is greater than or
equal to $2C$ (recall that if $d(A,B)\ge 2C$ then $A\cap\VC$ and $B\cap \VC$ are independent). Below, we will consider events of the type $\{L_r(0)\setminus L_s(0)\subset \VC\}$, and if $s\ge r$ we will use the convention that such an event is the entire sample space.

We get that
\begin{multline}\label{est1}
\P[J(r,\theta)\subset\VC]\\ =\P[\{J(r\wedge t_\theta,\theta)\subset\VC\}\cap \{L_r(0)\setminus L_{t_\theta}(0)\subset \VC\}\cap \{L_r(\theta)\setminus L_{t_\theta}(\theta)\subset \VC\}]\\ \ge\P[J(r\wedge t_\theta,\theta)\subset\VC]\P[L_r(0)\setminus L_{t_\theta}(0)\subset \VC]\P[L_r(\theta)\setminus L_{t_\theta}(\theta)\subset \VC],
\end{multline}
where the inequality follows from positive correlations.
On the other hand,
\begin{multline}\label{est2}
\P[\{J(r\wedge t_\theta,\theta)\subset\VC\}\cap \{L_r(0)\setminus L_{t_\theta}(0)\subset \VC\}\cap \{L_r(\theta)\setminus L_{t_\theta}(\theta)\subset \VC\}]\\ \le \P[\{J(r\wedge t_\theta,\theta)\subset\VC\}\cap \{L_r(0)\setminus L_{t_\theta+2C}(0)\subset \VC\}\cap \{L_r(\theta)\setminus L_{t_\theta+2C}(\theta)\subset \VC\}]\\=\P[J(r\wedge t_\theta,\theta)\subset\VC]\P[L_r(0)\setminus L_{t_\theta+2C}(0)\subset \VC]\P[L_r(\theta)\setminus L_{t_\theta+2C}(\theta)\subset \VC]\\=\Theta(1)\P[J(r\wedge t_\theta,\theta)\subset\VC]\P[L_r(0)\setminus L_{t_\theta}(0)\subset \VC]\P[L_r(\theta)\setminus L_{t_\theta}(\theta)\subset \VC],
\end{multline}
where we used independence at distance $2C$ in the first equality.
We also have
\begin{equation}\label{est3}
\P[J(t_{\theta}\wedge r)\subset \VC]\le \P[L_{t_{\theta}\wedge r}(\theta)\subset \VC].
\end{equation}
Let $x(l)$ be the point on $L_\infty(0)$ which is at distance $l$ from $o$. Then thanks to Lemma \ref{interm}, we have
\begin{multline}\label{est4}
\P[J(t_{\theta}\wedge r)\subset \VC]\ge \P[Q(o,x(t_{\theta}\wedge r),2C)]\overset{~\eqref{comparable}}=\Theta(1)\P[L_{t_{\theta}\wedge r}(0)\subset\VC].
\end{multline}
From~\eqref{est1}, ~\eqref{est2}, ~\eqref{est3} and ~\eqref{est4} we get
\begin{multline}\label{est5}
\P[J(r,\theta)\subset\VC]\\ =\Theta(1)\P[L_{t_{\theta}\wedge r}(0)\subset\VC]\P[L_r(0)\setminus L_{t_\theta}(0)\subset \VC]\P[L_r(\theta)\setminus L_{t_\theta}(\theta)\subset \VC]\\=\Theta(1)f(t_{\theta}\wedge r) f(0\vee r-t_\theta)^2=\Theta(1) f(r) f(0\vee r-t_\theta).
\end{multline}
Consequently,
\begin{equation}\label{andraest1}\int_0^{\eps}\P[0\in Y_r,\theta
\in Y_r]\,d\theta =\Theta(1)f(r)\int_0^{\eps}f(0\vee
r-t_\theta)\,d\theta.
\end{equation}
 We next investigate the behavior of $t_\theta$. Let $\gamma(t)$ be
the geodesic which starts at $0$ and then follows
$L_\infty(\theta)$, and suppose that $\gamma(t)$ is parameterized
by arc-length. Given $\theta$ and $t$, we first want to find the
distance between $\gamma(t)$ and $L_{\infty}(0)$. Denote this
distance by $s=s(t)$. By the hyperbolic law of cosines we have
\begin{equation}\label{hypcos}\cosh(2s)=\cosh^2(t)-\sinh^2(t)\cos(2\theta).\end{equation}
Using the identity $\cosh^2(t)-\sinh^2(t)=1$ we see that $s=2R$ if
and only if
\begin{align}\label{teq}t =t_\theta=
\cosh^{-1}\left(\sqrt{\frac{\cosh(4C)-\cos(2\theta)}{1-\cos(2\theta)}}\right).\end{align}
A calculation shows that $r-t_\theta>0$ if and only if
\begin{multline}\theta>
h(C,r):=\frac{1}{2}\cos^{-1}\left(\frac{\cosh^2(r)-\cosh(4C)}{\cosh^2(r)-1}\right)\\
=\frac{1}{2}\cos^{-1}\left(1-\frac{\cosh(4C)-1}{\cosh^2(r)-1}\right).\end{multline}
Now note that
\begin{equation}\label{hequat} h(r,C)=\Theta(1)\,e^{-r}\end{equation} for all $r$
large enough (using $\cos^{-1}(1-x)=\sqrt{2 x}+O(x^{3/2})$ for
small $x$). Let
$$\hat{t}(\theta):=\sqrt{\frac{\cosh(4C)-\cos(2\theta)}{1-\cos(2\theta)}}.$$

Using 
$$\cosh^{-1}(x)=\log(x+\sqrt{x^2-1})\in [\log(x),\log(x)+\log(2)),\quad x\ge 1$$
and
$$1-\cos(2\theta)=2\theta^2+O(\theta^3)\in[\frac{4}{\pi}\theta^2,2\theta^2],\quad \theta\in [0,\pi/4],$$
we get that for all large $r$
\begin{multline}\int_0^{\eps}f(0\vee r-t_\theta)\,d\theta\\
=\Theta(1)\left(\int_0^{h(C,r)}\,d\theta +
\int_{h(C,r)}^{\eps}e^{-\alpha(r-t_\theta)}\,d\theta\right)\\=\Theta(1)\left(h(C,r)+e^{-\alpha
r}\int_{h(C,r)}^{\eps}e^{\alpha(t_\theta)}\,d\theta\right)\\=
\Theta(1)\left(h(C,r)+e^{-\alpha
r}\int_{h(C,r)}^{\eps}\left(\hat{t}(\theta)+\sqrt{\hat{t}(\theta)^2-1}\right)^{\alpha}\,d\theta\right)
\end{multline}
\begin{multline}\label{longeq}
=\Theta(1)\left(h(C,r)+e^{-\alpha
r}\int_{h(C,r)}^{\eps}(1-\cos(2\theta))^{-\alpha/2}\,d\theta\right)
\\=\Theta(1)\left(e^{-r}+e^{-\alpha r}\int_{\Theta(1) e^{-r}}^{\eps}\theta^{-\alpha}\,d\theta\right)\\=\left\{\begin{array}{ll} \Theta(1)\,e^{-r}, & \alpha>1 \\ \Theta(1)\,r\,e^{-r}, & \alpha=1
  \end{array} \right.\end{multline}
Combining~\eqref{longeq}, \eqref{andraest1}, \eqref{secondmomentest} and \eqref{secondmomentest1}, we see that for large $r$,

\begin{equation}\label{andramomentfinal}
\E[y_r(\eps)^2]=\left\{\begin{array}{ll} \Theta(1)\,e^{-(1+\alpha)r}, & \alpha>1 \\ \Theta(1)\,r\,e^{-2 r}, & \alpha=1
  \end{array} \right.
\end{equation}

Note that
\begin{equation}\label{finaleqs}
\sum_{k=1}^8\P[Y_r((k-1)\pi/4,k\pi/4)\ne \emptyset]\ge \P[Y_r\ne
\emptyset]\ge
  \P[Y_r(\eps)\ne \emptyset].
\end{equation}
 Since $\E[y_r(\eps)]^2=\eps^2 e^{-2\alpha r}$,
Lemma~\ref{tightsecondmoment}, ~\eqref{andramomentfinal}
and~\eqref{finaleqs} implies that for large $r$,

\begin{equation}\label{asymptfinal}
\P[Y_r(\eps)\neq \emptyset]=\left\{\begin{array}{ll}
\Theta(1)\,e^{-(\alpha-1)r}, & \alpha>1 \\ \Theta(1)\,\frac{1}{r},
& \alpha=1.
  \end{array} \right.
\end{equation}
The result follows.\qed

\section{Generalizations of
Theorem~\ref{maintthm1}}\label{extensions} The proof of
Theorem~\ref{maintthm1} in Section~\ref{pfmain} can be fully or
partially adapted to other settings than visibility inside
${\mathcal W}$. Here are some important cases.
\subsection{Random convex shapes}
Let $K$ be a closed random convex shape which contains the origin, such that the diameter of
$K$ is a.s. less than $C<\infty$. In addition, assume that the law
of $K$ is invariant under all rotations of ${\mathbb H}^2$. For
$x\in \H^2$ let $\phi_x\,:\,\H^2\to \H^2$ be the M\"obius
transform mapping $x$ to $0$:
\begin{equation}\label{mobius}\phi_x(z)=\frac{z-x}{1-\bar{x}\,z}.\end{equation} For each $x\in X$, let
$K_x$ be an independent copy of $K$, and let
$${\mathcal C}_K=\bigcup_{x\in X}\phi_x^{-1}(K_x)\text{ and }{\mathcal W}_K=\overline{{\mathbb H}^2\setminus {\mathcal C}}.$$

It is easy to see that the proofs above for balls of random radius
are adaptable to this more general case. All results from \cite{BJST} used in the above proofs are valid also in this case. Thus the conclusions
of Theorem~\ref{maintthm1} and Proposition~\ref{critprop} remain
true when replacing balls with random convex shapes. The value of
$\alpha$ will of course depend on the law of $K$. In this case one
gets, as in the proof of Lemma~\ref{alphalemma},
\begin{equation}\label{alphaconvex}f(r)=e^{-\lambda\E[\mu(\{x\,:\,\phi_x^{-1}(K)\cap\gamma[0,r]\neq\emptyset\})]}.\end{equation}
To find the explicit value of $\alpha$, one has to calculate the
expectation appearing in the exponent in~\eqref{alphaconvex}.

\subsection{Asymptotics of visibility in the covered set}
It is also of interest to consider visibility inside the covered set ${\mathcal C}$. Let ${\mathfrak V}'$ be the supremum of the set of $r\ge 0$ such that there is a line-segment of length $r$ starting at the origin which is fully contained in ${\mathcal C}$. Let $h(r)$ be the probability that a fixed line-segment is contained in ${\mathcal C}$. In \cite{BJST}, it was shown that there is some $\alpha'$ such that $h(r)=\Theta(e^{-\alpha' r})$. The lower bound in Theorem~\ref{maintthm1} is just using the ordinary second moment method. Moreover, the calculations in the proof of Theorem~\ref{maintthm1}, might be adapted to visibility inside ${\mathcal C}$, except where reference to Lemma~\ref{tightsecondmoment} is made. The derivation of the upper bound in Lemma~\ref{tightsecondmoment} does not go through for the covered set. In particular, we currently do not know how to prove Eq.~\eqref{andramoment3}. Consequently, at the moment we only know lower bounds as follows. There is $c>0$ (depending on the law of the obstacles) and $r_0<\infty$ such that

\begin{equation}\label{covset}
\left\{\begin{array}{ll}
\P[{\mathfrak V}'\ge r]\ge c\, r^{-1}, & \alpha'= 1,\text{ }r\ge r_0 \\ \P[{\mathfrak V}'\ge r]\ge c\, e^{-(\alpha'-1)r},
& \alpha'>1,\text{ }r\ge r_0.
  \end{array} \right.
\end{equation}

\subsection{Asymptotics of visibility outside a Poisson line
process} We consider a Poisson line process in the Poincar\'e disc
model of $\H^2$ defined as follows: we let ${\mathcal P}$ be a Poisson point
process in the open unit disk with intensity measure
$$\mu_{\lambda}(d\rho,d\theta)=2\lambda \frac{(1+\rho^2)}{(1-\rho^2)^2}d\rho d\theta.$$
For every $x\in {\mathcal P}$, let $G_x$ be the hyperbolic line which contains $x$ and is orthogonal to the
Euclidean line segment $[0,x]$. Let \begin{align}\label{linedef}{\mathcal L}=\bigcup_{x\in {\mathcal P}}G_x.\end{align} In particular, the law of ${\mathcal L}$ is invariant under rotations around $0$, and this will be used below without further mention.

In the same spirit as for the Boolean model, we denote by
$Y_r(\epsilon)$, $\epsilon\in [0,2\pi]$, the set of all directions
$\theta\in [0,\epsilon)$ such that the line $L_r(\theta)$ does not intersect ${\mathcal L}$. We keep the
same notations $y_r(\epsilon)$ and $Y_r:=Y_r(2\pi)$. In other
words, $Y_r$ is the set of directions in which we can see up to
distance $r$ without meeting any line from the Poisson line
process.

In \cite{BJST}, the existence of an explicit critical intensity
equal to $\lambda=\frac{1}{2}$ has been proved (In \cite{BJST}, a different but equivalent, up to scaling of the intensity measure, way of describing the Poisson line process was used. Therefore, the critical value there is $1$, rather than $1/2$.) In \cite{PB}, an
upper-bound for the distribution tail of the maximal visibility
had previously been derived. We intend here to get a new more precise
estimate as in Theorem \ref{maintthm1}.

In particular, we can show an analogue of Lemma
\ref{tightsecondmoment}: for every $r>0$ and $\epsilon\in
(0,\pi/2),$
\begin{equation}\label{tsmline}
 \frac{\E[y_r(\eps)]^2}{\E[y_r(\eps)^2]}\le \P[Y_r(\eps)\neq\emptyset]\le 4 \frac{\E[y_r(\eps)]^2}{\E[y_r(\eps)^2]}.
\end{equation}
The proof of (\ref{tsmline}) can be written along the same lines.
The only point which requires an extra argument is the extention
of the equality (\ref{andramoment}) to the setting of the Poisson
line process. To do so, let us define
$$ M_r(\theta)=\{x\in \H^2: G_x\cap L_r(\theta)\ne \emptyset\}.$$
Then conditionally on $\{j/N\in Y_r\}$, the events $\{0\in
Y_r^c,1/N\in Y_r^c,...,(j-1)/N\in Y_r^c\}$ and $\{\theta\in Y_r\}$
are independent. Indeed, the first one is determined by the
intersection of the point process ${\cal P}$ with
$\bigcup_{i=0}^{j-1}M_r(i/N)\setminus M_r(j/N)$ whereas the second
one is determined by the intersection of ${\cal P}$ with a
disjoint set, namely $M_r(\theta)\setminus M_r(j/N)$. This is
sufficient to prove (\ref{andramoment}) and deduce
(\ref{tsmline}).

We now use (\ref{tsmline}) to show our main theorem.
\begin{theorem}\label{mainthmplp}
When $r\to \infty$, we have
\begin{equation}\label{mainstat3}
\P[Y_r\neq \emptyset]=\left\{\begin{array}{ll}
\Theta(1)\,e^{-(2\lambda-1)r}, & \lambda>1/2 \\
\Theta(1)\,\frac{1}{r} & \lambda=1/2
  \end{array} \right.
\end{equation}
\end{theorem}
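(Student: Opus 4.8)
The plan is to mirror the proof of Theorem~\ref{maintthm1} exactly, using the second-moment estimate~\eqref{tsmline} (already granted above) in place of Lemma~\ref{tightsecondmoment}. The entire argument thus reduces to estimating the two moments $\E[y_r(\eps)]$ and $\E[y_r(\eps)^2]$ for the line-process vacant set, and feeding these into~\eqref{tsmline} together with the sandwiching inequalities~\eqref{secondmomentest}, \eqref{secondmomentest1} and the union/monotonicity bound~\eqref{finaleqs}, which hold verbatim here by rotational invariance of $\mathcal L$. The first moment is immediate: by invariance $\E[y_r(\eps)]=\eps\,f(r)$ where now $f(r)=\P[L_r(\theta)\cap\mathcal L=\emptyset]$ is the single-direction visibility probability for the line process. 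The key input I would record first is that for the line process one has $f(r)=\Theta(e^{-2\lambda r})$; this is the analogue of~\eqref{deceq} with $\alpha=2\lambda$, and it follows by computing $\mu_\lambda(M_r(\theta))$, the $\mu_\lambda$-measure of the set of lines meeting $L_r(\theta)$, which grows linearly in $r$ with slope $2\lambda$ (so that the critical value is $\lambda=1/2$, matching $\alpha=1$).

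The heart of the matter is the second moment, i.e.\ estimating $\P[0\in Y_r,\theta\in Y_r]=\P[(L_r(0)\cup L_r(\theta))\cap\mathcal L=\emptyset]$ for small $\theta$, which equals $e^{-\mu_\lambda(M_r(0)\cup M_r(\theta))}$. I would decompose this exactly as in~\eqref{est1}--\eqref{est5}: split each ray at the arclength $t_\theta$ beyond which the two rays are far apart (here "far apart" should be replaced by the condition that no single line can cross both rays past that point, i.e.\ $M_r(0)$ and $M_r(\theta)$ decouple), so that $\mu_\lambda(M_r(0)\cup M_r(\theta))=\mu_\lambda(M_{t_\theta\wedge r}(0)\cup M_{t_\theta\wedge r}(\theta))+\mu_\lambda(M_r(0)\setminus M_{t_\theta}(0))+\mu_\lambda(M_r(\theta)\setminus M_{t_\theta}(\theta))$ up to an additive $\Theta(1)$ overlap correction. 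This yields, as in~\eqref{est5}, the factorization $\P[0\in Y_r,\theta\in Y_r]=\Theta(1)\,f(r)\,f(0\vee r-t_\theta)$. The geometry controlling $t_\theta$ is the same hyperbolic-law-of-cosines computation~\eqref{hypcos}--\eqref{teq} (with the threshold distance $2C$ between the rays replaced by the appropriate line-crossing threshold, which is again a fixed constant since lines through $x\in\mathcal P$ are determined by $x$ alone), giving $h(\cdot,r)=\Theta(e^{-r})$ and $t_\theta=\cosh^{-1}(\hat t(\theta))$ with $\hat t(\theta)\asymp\theta^{-1}$ for small $\theta$. Then the integral $\int_0^\eps f(0\vee r-t_\theta)\,d\theta$ is evaluated exactly as in the chain~\eqref{longeq}, producing $\Theta(e^{-r})$ for $2\lambda>1$ and $\Theta(r\,e^{-r})$ for $2\lambda=1$, hence $\E[y_r(\eps)^2]=\Theta(e^{-(1+2\lambda)r})$ (resp.\ $\Theta(r\,e^{-2r})$).

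Substituting $\E[y_r(\eps)]^2=\eps^2 e^{-4\lambda r}$ and these second-moment values into~\eqref{tsmline}, and using~\eqref{finaleqs}, gives $\P[Y_r\ne\emptyset]=\Theta(e^{-(2\lambda-1)r})$ for $\lambda>1/2$ and $\Theta(1/r)$ for $\lambda=1/2$, which is~\eqref{mainstat3}. The main obstacle, and the only place where genuine new work is needed rather than transcription, is the geometric decoupling step: I must verify that for the line process there is a finite threshold, depending only on the fixed constant controlling how lines are attached to points of $\mathcal P$, beyond which the two half-rays $L_r(0)$ and $L_r(\theta)$ cannot be met by a common line, so that $M_r(0)$ and $M_r(\theta)$ become disjoint in $\mathcal P$-space and the Poisson independence used in~\eqref{est1}--\eqref{est2} applies. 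Establishing this, together with the linear growth and slope of $\mu_\lambda(M_r(\theta))$, are the two computations I would carry out carefully; everything downstream is identical to Section~\ref{pfmain}.
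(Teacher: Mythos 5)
There is a genuine gap, and it sits exactly at the step you flagged as the ``main obstacle'': the finite decoupling threshold you need does not exist. Your plan transplants the decomposition \eqref{est1}--\eqref{est5}, whose engine is that in the Boolean model obstacles have diameter at most $2C$, so the vacancy states of two sets at distance $\ge 2C$ are functions of disjoint portions of the Poisson process. For the line process the obstacles are full geodesics, which are unbounded: for \emph{any} $t<r$, and any two points $p\in L_r(0)\setminus L_t(0)$ and $q\in L_r(\theta)\setminus L_t(\theta)$, the geodesic through $p$ and $q$ lies in the support of ${\mathcal L}$ and meets both far portions. Hence the sets $\{x: G_x\cap (L_r(0)\setminus L_t(0))\ne\emptyset\}$ and $\{x: G_x\cap (L_r(\theta)\setminus L_t(\theta))\ne\emptyset\}$ are never disjoint, no matter how large $t$ is, and the Poisson independence you invoke in the analogues of \eqref{est1}--\eqref{est2} simply fails. (Your parenthetical justification --- ``lines through $x\in{\mathcal P}$ are determined by $x$ alone'' --- guarantees that ${\mathcal L}$ is a deterministic function of ${\mathcal P}$, but says nothing about boundedness of the obstacles, which is what the $2C$-argument actually uses.) One could try to salvage your route quantitatively, by showing that the $\mu_\lambda$-measure of lines crossing \emph{both} far portions is $O(1)$ uniformly in $r$ (this is plausible, via the integral-geometric formula for the measure of lines meeting two disjoint segments, i.e.\ diagonals minus outer sides of the hull quadrilateral), but that is a different and nontrivial computation which your proposal neither states nor sketches.

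The paper avoids decoupling altogether by an exact identity special to lines: a geodesic meets $L_r(0)\cup L_r(\theta)$ if and only if it meets the hyperbolic triangle $T_{r,\theta}$ with apices $0$, $a_r$, $b_r$ (a line entering the triangle must cross two \emph{distinct} sides, since two geodesics intersect at most once, so it cannot enter and exit through the third side alone), and by the hyperbolic Cauchy--Santal\'o formula the measure of lines meeting a convex set is $\lambda$ times its perimeter. This gives the exact formula
\begin{equation*}
\P[0\in Y_r,\ \theta\in Y_r]=\exp\bigl(-\lambda\,\mathrm{per}(T_{r,\theta})\bigr),
\qquad
\mathrm{per}(T_{r,\theta})=2r+\cosh^{-1}\bigl(\cosh^2(r)(1-\cos\theta)+\cos\theta\bigr),
\end{equation*}
after which the second moment is a direct one-variable integral (no $t_\theta$, no $h(C,r)$, no case analysis about overlap constants); the first moment is likewise exact, $\P[0\in Y_r]=e^{-2\lambda r}$. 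Your skeleton --- the use of \eqref{tsmline}, the first-moment identification $\alpha=2\lambda$, the final arithmetic yielding $e^{-(2\lambda-1)r}$ and $1/r$ --- is correct and matches the paper, but the second-moment estimate, which is the heart of the theorem, rests as written on a false geometric claim and therefore does not go through.
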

\begin{proof}
As for Theorem \ref{maintthm1}, the proof relies on the use of
(\ref{tsmline}) and the estimation of both the first and second
moments of $y_r(\epsilon)$.

By equation 17.61 in \cite{Santalo}, we have
$$\P[0\in Y_r]=\exp(-2\lambda r).$$
Moreover $$\P[0\in Y_r,\theta\in
Y_r]=\exp(-\lambda\mbox{per}(T_{r,\theta}))$$ where $\mbox{per}$
denotes the perimeter and $T_{r,\theta}$ is the hyperbolic
triangle with apices $0$, $a_r$ and $b_r$, $a_r$ (resp. $b_r$)
being the point on $L_r(0)$ (resp. $L_r(\theta)$) at distance $r$
from the origin.

We have
$$\mbox{per}(T_{r,\theta})=2r+\cosh^{-1}(\cosh^2(r)(1-\cos(\theta))+\cos(\theta)).$$
In particular, since $\cosh^{-1}(t)=\log(t+\sqrt{t^2-1})$ for every $t\ge 1$, we have
$$\log(t)\le \cosh^{-1}(t)\le \log(t)+\log(2),\quad t\ge 1.$$
Consequently, we deduce that when $r\to \infty$,
\begin{eqnarray}\label{interm31}
\int_0^{\epsilon}\P[0\in Y_r,\theta\in Y_r]d\theta&=& \Theta(1)e^{-2\lambda r}\int_0^{\epsilon}\left(\cosh^2(r)(1-\cos(\theta))+\cos(\theta)\right)^{-\lambda}d\theta\nonumber\\
&=& \Theta(1)e^{-2\lambda r}\int_0^{\epsilon}(e^{2r}+2+\cos(\theta)(2-e^{2r}))^{-\lambda}d\theta.
\end{eqnarray}
Moreover, for any $\theta\in(0,\pi/2)$, 
$1-\frac{\theta^2}{2}\le\cos(\theta)\le 1-\frac{\theta^2}{\pi}.$ Replacing $\cos(\theta)$ in \eqref{interm31}, we notice that for $C$ equal to $2$ or $\pi$, we have
\begin{eqnarray}
 \int_0^{\epsilon}\left(e^{2r}+2+(1-\frac{\theta^2}{C})(2-e^{2r})\right)^{-\lambda}d\theta&=&\int_0^{\epsilon}\left(4+\theta^2(\frac{e^{2r}}{C}-\frac{2}{C})\right)^{-\lambda}d\theta\nonumber\\&&\nonumber\\
&=&\left(\frac{e^{2r}}{C}-\frac{2}{C}\right)^{-1/2}
\int_0^{\frac{\epsilon}{\sqrt{C}}\sqrt{e^{2r}-2}}
\frac{d\theta}{(4+\theta^2)^{\lambda}}
\nonumber\\&&\nonumber\\
&=& \left\{\begin{array}{ll}\Theta(1)e^{-r}\int_0^{\infty}\frac{d\theta}{(1+\theta^2)^{-\lambda}}&\mbox{ if
$\lambda>1/2$}\\\Theta(1)e^{-r}\cdot r&\mbox{ if
$\lambda=1/2$}.\end{array}\right.\nonumber
\end{eqnarray}
Inserting this last result in \eqref{interm31}, we obtain that
\begin{equation}\label{interm32}
\int_0^{\epsilon}\P[0\in Y_r,\theta\in Y_r]d\theta= \left\{\begin{array}{ll}\Theta(1)e^{-(2\lambda+1)
r}&\mbox{ if
$\lambda>1/2$}\\\Theta(1)e^{-2r}\cdot r&\mbox{ if
$\lambda=1/2$}.\end{array}\right.
\end{equation}
We conclude by inserting \eqref{interm32} in (\ref{tsmline}).
\end{proof}

\section{Critical point and small intensity}\label{nearcritical}
\subsection{When $\alpha\to 1$}
In this section, we study the behavior of the visibility near the critical point $\lambda=\lambda_{gv}$. Recall that ${\mathfrak V}$ is the total visibility, i.e.
$${\mathfrak V}=\sup\{r>0: Y_r\ne\emptyset\}.$$
Let $${\mathcal S}=\{x\in \H^2\,:\,[0,x]\subset\VC\}$$ be the set
of all points visible from the origin. The set ${\mathcal S}$ is
sometimes called the visibility star. Recall that
$\alpha=2\lambda\E[\sinh(R)]$.

\begin{proposition}\label{expmaxvis}
When $\lambda\searrow \lambda_{gv}$, we have
$$\E[\area({\mathcal S})]=\frac{\tilde{\Theta}(1)}{\alpha-1}\;\; \mbox{ and } \;\E[{\mathfrak V}]=\frac{\tilde{\Theta}(1)}{\alpha-1}.$$
\end{proposition}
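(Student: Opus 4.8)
The plan is to compute both expectations by integrating the tail estimates that are already available. For the area of the visibility star $\mathcal S$, I would first write
\[
\E[\area(\mathcal S)]=\E\Bigl[\int_{\H^2}{\bf 1}\{[0,x]\subset\VC\}\,d\mu(x)\Bigr]
=\int_{\H^2}\P[[0,x]\subset\VC]\,d\mu(x)
\]
by Fubini. Parameterizing $\H^2$ in geodesic polar coordinates $(r,\theta)$ around the origin, where $d\mu=\sinh(r)\,dr\,d\theta$, and using rotational invariance together with $\P[[0,x]\subset\VC]=f(r)=\Theta(e^{-\alpha r})$ from~\eqref{deceq}, this becomes
\[
\E[\area(\mathcal S)]=2\pi\int_0^\infty f(r)\sinh(r)\,dr
=\Theta(1)\int_0^\infty e^{-\alpha r}\sinh(r)\,dr.
\]
The remaining integral equals $\tfrac12\int_0^\infty(e^{-(\alpha-1)r}-e^{-(\alpha+1)r})\,dr=\tfrac12\bigl(\tfrac1{\alpha-1}-\tfrac1{\alpha+1}\bigr)$, which is $\frac{\tilde\Theta(1)}{\alpha-1}$ as $\alpha\searrow 1$. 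The crucial point for getting $\tilde\Theta$ rather than merely $\Theta$ is that the implied constants in~\eqref{deceq} can be taken uniform in $\lambda$ over a neighborhood of $\lambda_{gv}$; I would justify this from the explicit form $f(r)=e^{-\lambda\E[2\pi(\cosh R-1)+2r\sinh R]}$ derived in Lemma~\ref{alphalemma}, which gives the sharp bounds $e^{-\alpha r}e^{-2\pi\lambda\E[\cosh R-1]}\le f(r)\le e^{-\alpha r}$ directly, with the prefactor bounded away from $0$ and $\infty$ uniformly near $\lambda_{gv}$.

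For $\E[\mathfrak V]$ I would integrate the tail, $\E[\mathfrak V]=\int_0^\infty\P[\mathfrak V\ge r]\,dr=\int_0^\infty\P[Y_r\ne\emptyset]\,dr$. By Theorem~\ref{maintthm1} (equivalently~\eqref{mainstat1}), for $\alpha>1$ and $r$ large we have $\P[Y_r\ne\emptyset]=\Theta(e^{-(\alpha-1)r})$, so the tail contribution is $\Theta\bigl(\tfrac1{\alpha-1}\bigr)$; integrating $e^{-(\alpha-1)r}$ gives $\tfrac1{\alpha-1}$, which diverges as $\alpha\searrow1$ and dominates the bounded contribution from small $r$ (where $\P[Y_r\ne\emptyset]\le 1$ contributes only $O(1)$). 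Again the key is uniformity of the $\Theta$-constants in $\lambda$ near $\lambda_{gv}$, upgrading $\Theta$ to $\tilde\Theta$.

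The main obstacle is precisely this uniformity in $\lambda$: Theorem~\ref{maintthm1} as stated allows its constants to depend on $\lambda$, so I must verify that the whole chain of estimates in Section~\ref{pfmain} can be run with constants uniform over a compact $\lambda$-interval containing $\lambda_{gv}$. The paper already flags the needed ingredient --- the remark after~\eqref{comparable} that $c_1$ in~\eqref{comparable} can be chosen uniformly in $\lambda$ on a compact interval when $R$ is fixed --- and the explicit computations~\eqref{longeq}, \eqref{andramomentfinal} depend on $\lambda$ only through $\alpha=2\lambda\E[\sinh R]$, with all other constants geometric. I would therefore track the $\lambda$-dependence through Lemma~\ref{tightsecondmoment} and~\eqref{asymptfinal} to confirm that only $\alpha$ enters, so that the second-moment ratio yields $\P[Y_r\ne\emptyset]=\tilde\Theta(e^{-(\alpha-1)r})$ uniformly as $\alpha\searrow1$. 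A secondary point to handle is the lower tail bound near $r=0$ and the crossover scale $r\asymp (\alpha-1)^{-1}$, but since the integrands are uniformly bounded there, these regions contribute only $\tilde\Theta(1)$ and do not affect the leading $\frac{\tilde\Theta(1)}{\alpha-1}$ behavior.
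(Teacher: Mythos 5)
Your proposal is correct and takes essentially the same route as the paper: Fubini plus integration of the exact tail $f(r)=\tilde{\Theta}(1)e^{-\alpha r}$ for $\E[\area(\mathcal S)]$ (the paper writes the identical integral in Poincar\'e-disc coordinates rather than geodesic polar coordinates), and $\E[{\mathfrak V}]=\int_0^\infty\P[Y_r\ne\emptyset]\,dr$ combined with~\eqref{mainstat1} for the second estimate, with the crux in both cases being the upgrade of $\Theta(1)$ to $\tilde{\Theta}(1)$. The uniformity check that you outline but defer is exactly what the paper's proof spends its effort on: it verifies that the constants entering~\eqref{deceq} and~\eqref{mainstat1} (through~\eqref{est2}, \eqref{est4} and~\eqref{est5}) are controlled by quantities such as $f(2C)$, $f(4C)$ and $e^{-\lambda(\area({\mathcal E}([x,y]_s,R))-\area({\mathcal E}([x,y]_0,R)))}$, all of which stay bounded away from $0$ uniformly for $\lambda\le\lambda_1$.
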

\begin{proof}
Fix $\lambda_1>\lambda_{gv}$. We now verify that $\Theta(1)$ in both (\ref{deceq})
and in (\ref{mainstat1}) stay in $(0,\infty)$ when $\lambda\in[0,\lambda_1]$,
 which implies that they can be replaced by $\tilde{\Theta}(1)$ when $\lambda\in[0,\lambda_1]$.

Indeed, we get from the proof of Lemma 3.4 in \cite{BJST} that the
constant in $(\ref{deceq})$ is between $f(2C)$ and $1$ and the
quantity $f(2C)\in(0,1]$  for $\lambda\le \lambda_1$.

As for $\Theta(1)$ in (\ref{mainstat1}), it is deduced from
displays (\ref{est2}), (\ref{est4}) and (\ref{est5}).
\begin{itemize}
\item In (\ref{est2}), we have
  \begin{eqnarray*}
\P[L_r(0)\setminus L_{t_\theta}(0)\subset \VC]&=&\P[\{L_r(0)\setminus L_{t_\theta+2C}(0)\subset \VC\}\\ & & \cap\{L_{t_\theta+2C}(0)\setminus L_{t_\theta}(0)\subset \VC\}]\\
&\ge&\P[L_r(0)\setminus L_{t_\theta+2C}(0)\subset \VC]\\ & &\times \P[L_{t_\theta+2C}(0)\setminus L_{t_\theta}(0)\subset \VC]\\
&= & \P[L_r(0)\setminus L_{t_\theta+2C}(0)\subset \VC]\\ & & \times
\P[L_{2C}(0)\subset \VC].
  \end{eqnarray*}
It suffices to see now that $\P[L_{2C}(0)\subset
\VC]=f(2C)\in(0,1]$ for $\lambda\le \lambda_1$.
\item In (\ref{est4}), the constant comes from the calculation of $\P[Q(x,y,s)]$, $x,y\in\H^2$, $s>0$. Let us consider
\begin{equation}\label{defE[x,y]_s^R}
{\mathcal E}([x,y]_s,R)=\{z\in \H^2:B(z,R)\cap
[x,y]_s\ne\emptyset\}.
\end{equation}
Then
\begin{align}\begin{split}
\P[Q(x,y,s)]&=e^{-\lambda\cdot {\area(\mathcal E}([x,y]_s,R))}\\&=e^{-\lambda(\area({\mathcal E}([x,y]_s,R))-\area({\mathcal E}([x,y]_0,R)))}f(d(x,y)).\end{split}
\end{align}
and it remains to notice that $e^{-\lambda(\area({\mathcal
E}([x,y]_s,R))-\area({\mathcal E}([x,y]_0,R)))}\in(0,1]$ when $\lambda\le \lambda_1$.
\item In (\ref{est5}), the constant is between $f(4C)$ and $1$ because when $r>t_{\theta}$, we have
$$f(r)f(4C)\le f(t_{\theta})f(r-t_{\theta})\le f(r).$$
\end{itemize}
Now, a classical use of Fubini's theorem and~\eqref{alphalemma} yields
to
\begin{eqnarray*}
\E[\area({\mathcal S})]&=&2\pi\int_0^1f\left(\log\left(\frac{1+r}{1-r}\right)\right)\frac{4r dr}{(1-r^2)^2}\\
&=&\tilde{\Theta}(1)\int_0^1\left(\frac{1+r}{1-r}\right)^{\alpha}\frac{r dr}{(1-r)^2(1+r)^2}\\
&=&\tilde{\Theta}(1)\int_0^1\frac{dr}{(1-r)^{2-\alpha}}\\
&=&\frac{\tilde{\Theta}(1)}{\alpha-1}\mbox{ when
$\lambda\searrow \lambda_{gv}$.}
\end{eqnarray*}
In the same way, the second estimate is obtained with the use of
(\ref{mainstat1}).
$$\E[{\mathfrak V}]=\int_0^{\infty}\P[{\mathfrak V}\ge r]dr=\int_0^{\infty}\P[Y_r\ne\emptyset]dr=\frac{\tilde{\Theta}(1)}{\alpha-1}\text{, }\lambda\searrow \lambda_{gv}.$$
\end{proof}

We conclude the section by studying how the probability to see to infinity increases as $\lambda$ increases from $\lambda_{gv}$.

\begin{proposition}\label{critprop}For $\lambda\in [0,\lambda_{gv}]$,
\begin{equation}\label{critexp}
 \P[Y_\infty(\eps)\neq\emptyset]=\tilde{\Theta}(1)(1-\alpha)
\end{equation}
\end{proposition}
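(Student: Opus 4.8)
The plan is to prove Proposition~\ref{critprop} by combining the second-moment estimate of Lemma~\ref{tightsecondmoment}, taken in the limit $r\to\infty$, with the explicit computations already developed for the proof of Theorem~\ref{maintthm1}, while tracking the dependence of all constants on $\lambda$ (so that $\Theta(1)$ may be upgraded to $\tilde\Theta(1)$ exactly as was done in the proof of Proposition~\ref{expmaxvis}). First I would let $r\to\infty$ in the inequality~\eqref{andramoment}. Since $Y_r(\eps)$ is decreasing in $r$, the events $\{Y_r(\eps)\neq\emptyset\}$ decrease to $\{Y_\infty(\eps)\neq\emptyset\}$, so by monotone/dominated convergence $\P[Y_r(\eps)\neq\emptyset]\to\P[Y_\infty(\eps)\neq\emptyset]$. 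Thus it suffices to evaluate $\lim_{r\to\infty}\E[y_r(\eps)]^2/\E[y_r(\eps)^2]$ in the regime $\lambda<\lambda_{gv}$, i.e. $\alpha<1$, and to verify that the implicit constants do not depend on $\lambda$ on the range $[0,\lambda_{gv}]$.

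Next I would compute the two moments in the limit. By~\eqref{fubini1} we have $\E[y_r(\eps)]=\eps f(r)=\eps\,\Theta(1)e^{-\alpha r}$, so $\E[y_r(\eps)]^2=\eps^2\,\Theta(1)e^{-2\alpha r}$. For the second moment I would reuse the chain~\eqref{est5}, \eqref{andraest1} and the integral~\eqref{longeq}, but now carry the computation through in the regime $\alpha<1$. In that regime the integral $\int_{h(C,r)}^{\eps}\theta^{-\alpha}\,d\theta$ converges at its upper limit and is dominated by the upper endpoint, giving $\int_0^\eps f(0\vee r-t_\theta)\,d\theta=\Theta(1)\,e^{-\alpha r}$ (the $e^{-r}$ term from the $h(C,r)$ contribution is lower order since $\alpha<1$). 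Feeding this into~\eqref{andraest1}, \eqref{secondmomentest} and~\eqref{secondmomentest1} yields $\E[y_r(\eps)^2]=\Theta(1)\,e^{-2\alpha r}$ with the implied constant independent of $r$. Crucially, the $r$-dependence cancels: the ratio $\E[y_r(\eps)]^2/\E[y_r(\eps)^2]$ tends to a finite positive limit $\Theta(1)$ as $r\to\infty$. The remaining work is to identify the $\lambda$-dependence of this limit as $\alpha\to 1^-$, and here the key observation is that the divergence of $\int_0^\eps\theta^{-\alpha}\,d\theta$ as $\alpha\nearrow 1$ is $\Theta(1)/(1-\alpha)$, so that $\E[y_r(\eps)^2]=\frac{\tilde\Theta(1)}{1-\alpha}\,e^{-2\alpha r}$. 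Dividing, the ratio is $\tilde\Theta(1)(1-\alpha)$, and~\eqref{critexp} follows from~\eqref{andramoment}.

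The step I expect to be the main obstacle is the uniformity of the constants in $\lambda$, i.e. justifying the passage from $\Theta(1)$ to $\tilde\Theta(1)$. This requires revisiting each display that fed into the second-moment estimate and checking that the implied constants stay bounded away from $0$ and $\infty$ as $\lambda$ ranges over $[0,\lambda_{gv}]$, exactly the bookkeeping carried out in the proof of Proposition~\ref{expmaxvis}: the constant in~\eqref{comparable} can be taken uniform for $\lambda$ in a compact interval (as noted after~\eqref{comparable}), the constants arising in~\eqref{est2} and~\eqref{est4} are of the form $e^{-\lambda(\cdots)}\in(0,1]$ and $f(2C),f(4C)\in(0,1]$, all bounded uniformly on $[0,\lambda_{gv}]$. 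Once this is in hand, the factor capturing the full $\lambda$-dependence is precisely the $1/(1-\alpha)$ coming from the integral $\int_0^\eps\theta^{-\alpha}\,d\theta$, and everything else is $\tilde\Theta(1)$. A minor point to handle carefully is interchanging the limit $r\to\infty$ with the $\Theta$-estimates: since the estimate~\eqref{andramoment} holds for every fixed $r$ with constants uniform in $r$, the bounds survive the limit, and $\P[Y_\infty(\eps)\neq\emptyset]=\tilde\Theta(1)(1-\alpha)$ as claimed.
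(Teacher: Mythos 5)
Your proposal is correct and follows essentially the same route as the paper's own proof: both apply Lemma~\ref{tightsecondmoment}, rerun the computations behind~\eqref{longeq} and~\eqref{andramomentfinal} in the regime $\alpha<1$ to extract the factor $\frac{1}{1-\alpha}$ in $\E[y_r(\eps)^2]=\tilde{\Theta}(1)\frac{e^{-2\alpha r}}{1-\alpha}$, upgrade $\Theta(1)$ to $\tilde{\Theta}(1)$ via the constant-tracking done in the proof of Proposition~\ref{expmaxvis}, and let $r\to\infty$. Your additional care about the limit $\P[Y_r(\eps)\neq\emptyset]\to\P[Y_\infty(\eps)\neq\emptyset]$ (via monotonicity/compactness of the nested closed sets $Y_r(\eps)$) is a point the paper glosses over, and is handled correctly.
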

\begin{proof}
Repeating the calculations
leading to~\eqref{longeq} and~\eqref{andramomentfinal}, and using that from the proof of Proposition~\ref{expmaxvis} we can replace $\Theta(1)$ with $\tilde{\Theta}(1)$ at appropriate places, it follows that
\begin{equation}\label{supercrit1}
\int_0^{\eps}f(0\vee r-t_\theta)\,d\theta
=\tilde{\Theta}(1)\left(e^{-r}+\frac{e^{-\alpha
r}}{1-\alpha}-\frac{e^{-r}}{1-\alpha}\right)
\end{equation}
and
\begin{equation}\label{supcrit2}\E[y_r(\eps)^2]=\tilde{\Theta}(1)\left(e^{-(1+\alpha)r}+\frac{e^{-2\alpha
r}}{1-\alpha}-\frac{e^{-(1+\alpha)r}}{1-\alpha}\right)=\tilde{\Theta}(1)\frac{e^{-2\alpha r}}{1-\alpha},\end{equation}
where the last equality follows since $\alpha<1$. Using $\E[y_r(\eps)]^2=\eps^2 e^{-2\alpha r}$, Lemma~\ref{tightsecondmoment} and letting $r\to \infty$ we obtain the the result.\qed
\end{proof}
\subsection{When $\alpha\to 0$}
We conclude the section by showing that as $\alpha\to 0$, the probability to see to infinity from a given point goes to $1$.
\begin{proposition}
\begin{align}
\lim_{\alpha\to 0}\P[{\mathfrak V}=\infty]=1.
\end{align}
\end{proposition}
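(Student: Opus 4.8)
The plan is to reduce $\{\mathfrak V=\infty\}$ to the survival of a branching process and to show that this survival probability tends to $1$ as $\alpha\to 0$. Note first that no single direction can carry the event: since $f(r)=\Theta(e^{-\alpha r})\to 0$, for every fixed $\theta$ one has $\P[\theta\in Y_\infty]=0$, so $Y_\infty$ has Lebesgue measure zero and visibility to infinity is carried by a tree-like family of directions. I therefore invoke the comparison with a branching process used in \cite{BJST} to prove the $\alpha<1$ half of Theorem~\ref{bjsttheorem}: partition the circle of directions dyadically, and to a generation-$n$ arc $I$ (of length $2\pi\,2^{-n}$) associate the geodesic trapezoid $T_I$ lying in the cone over $I$ between hyperbolic radii $a(n-1)$ and $an$, for a fixed step $a$. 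Call $I$ \emph{good} if $T_I\subset\VC$. An infinite chain of nested good arcs forces the corresponding boundary ray to lie in $\VC$, so by a compactness (K\"onig's lemma) argument $\P[\mathfrak V=\infty]$ is bounded below by the survival probability of the Galton--Watson tree of good arcs.

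The key quantitative input is that this branching process becomes arbitrarily supercritical as $\alpha\to 0$. Indeed, from the computation in the proof of Lemma~\ref{alphalemma}, $f(r)=e^{-2\pi\lambda\E[\cosh R-1]}e^{-\alpha r}$, and since $\cosh R-1\le R\sinh R\le C\sinh R$ we have $2\pi\lambda\E[\cosh R-1]\le \pi C\alpha$; hence $f(r)\ge e^{-\alpha(\pi C+r)}\to 1$ as $\alpha\to 0$ for every fixed $r$. Consequently the conditional probability that a good arc has both of its dyadic children good, an event governed by the vacancy of two bounded-size trapezoids, tends to $1$ as $\alpha\to 0$. Thus, up to the correlations discussed below, the tree of good arcs stochastically dominates a binary-splitting process in which each individual has two offspring with probability $p(\alpha)\to 1$ and none otherwise; the survival probability of such a process tends to $1$, since its extinction probability is the relevant root $q=(1-p)/p$ of $q=(1-p)+pq^2$, which satisfies $q\to 0$ as $p\to 1$. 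Combining these facts gives $\P[\mathfrak V=\infty]\to 1$.

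The main obstacle is that the good-arc events are not independent, since a single ball may meet several trapezoids. This is handled exactly as in \cite{BJST}: trapezoids associated with arcs that are angularly separated enough are at hyperbolic distance at least $2C$, so vacancy on them is independent, while positive correlations control the short-range dependence; together these let one dominate the true process from below by an independent supercritical branching process with the offspring law above. The remaining care is to check that the step $a$ and the $2C$-separation scale can be chosen uniformly in $\alpha$ (they can, since $C$ is fixed and only $\lambda$ and the law of $R$ vary), so that $p(\alpha)\to 1$ and the survival probability converges to $1$ uniformly. Monotonicity of $\P[\mathfrak V=\infty]$ in $\lambda$, together with this lower bound and the trivial upper bound $\P[\mathfrak V=\infty]\le 1$, then yields the stated limit.
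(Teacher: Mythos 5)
Your reduction to a branching process fails at its key quantitative step: the claim that the offspring probability $p(\alpha)$ tends to $1$ as $\alpha\to 0$. Your goodness event is the vacancy of a two-dimensional trapezoid $T_I$ of fixed positive area, and
$$\P[T_I\subset \VC]=e^{-\lambda \E[\area(\{z\,:\,d(z,T_I)<R\})]}\le e^{-\lambda \area(T_I)},$$
so this probability is governed by $\lambda$ itself, not by $\alpha=2\lambda\E[\sinh(R)]$. But $\alpha\to 0$ does not force $\lambda\to 0$: take $\lambda$ fixed (or even $\lambda\to\infty$) and $\E[\sinh(R)]\to 0$. In such regimes every fixed region of positive area has vacancy probability bounded away from $1$ (indeed possibly tending to $0$), so $p(\alpha)\not\to 1$, your binary process need not even be supercritical, and the lower bound on $\P[{\mathfrak V}=\infty]$ evaporates. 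The sentence ``hence $f(r)\to 1$ \dots\ \emph{Consequently} the \dots\ vacancy of two bounded-size trapezoids tends to $1$'' is the precise non sequitur: $f$ controls segments, which are one-dimensional and are hit at rate $\Theta(\alpha)$, whereas a trapezoid must in particular contain no Poisson point at all. For the same reason, no block argument based on vacancy of fixed two-dimensional regions can even reproduce the criterion of Theorem~\ref{bjsttheorem}, since such vacancy probabilities are not functions of $\alpha$ alone; this is exactly why the arguments in \cite{BJST}, and in this paper, work with segment events (or with the sets $[x,y]_s$, whose vacancy is comparable to $f(d(x,y))$ by Lemma~\ref{interm}, at the cost of $\lambda$-dependent constants). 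Your argument does prove the statement in the special regime where $\alpha\to 0$ because $\lambda\to 0$ with the law of $R$ fixed, but not in the generality needed; note that the paper explicitly cares about regimes where $R\to 0$ while $\lambda$ grows (its final section).

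The paper's own proof stays one-dimensional throughout and thereby makes every constant depend on the parameters only through $\alpha$ (and the uniform bound $C$ on the radii). It applies the second moment inequality of Lemma~\ref{tightsecondmoment} to $y_r(\eps)$, bounds, using independence at distance $2C$,
$$\P[\theta\in Y_r,\theta'\in Y_r]\le C(\alpha)^2 e^{-2\alpha r}e^{\alpha t_{|\theta-\theta'|}},\qquad C(\alpha)=e^{-2\pi\lambda\E[\cosh(R)-1]},$$
observes that $C(\alpha)\to 1$ as $\alpha\to 0$ (via the same inequality $\cosh(R)-1\le C\sinh(R)$ you used), and checks that $\int_0^\eps\int_0^\eps e^{\alpha t_{|\theta-\theta'|}}\,d\theta\,d\theta'\to\eps^2$, so that $\E[y_r(\eps)]^2/\E[y_r(\eps)^2]\to 1$ uniformly in $r$. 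Letting $r\to\infty$ and then $\alpha\to 0$ gives the proposition. If you wish to keep a tree structure, the goodness events must be built from segments or from the $[x,y]_s$ of Lemma~\ref{interm}, and you would then have to track how the comparison constants depend on $\lambda$; the second moment route is the cleaner path.
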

\begin{proof}
In view of~\eqref{andramoment}, it is enough to show
\begin{align}\label{e.ets}
\lim_{\alpha\to 0}\liminf_{r\to \infty}\frac{\E[y_r]^2}{\E[y_r^2]}=1.
\end{align}
We have
\begin{align}
\begin{split}
\P[\theta,\theta'\in Y_r] & \le \P[L_r(\theta)\subset \VC]\P[L_r(\theta')\setminus L_{t_{|\theta-\theta'|}}\subset \VC]\\
 & \le C(\alpha)^2e^{-\alpha r}e^{-\alpha(0\vee r-t_{|\theta-\theta'|})}\\ & \le C(\alpha)^2e^{-2\alpha r}e^{\alpha t_{|\theta-\theta'|}}
\end{split}
\end{align}
where $$C(\alpha)=\exp\left(-\frac{\alpha}{2\E[\sinh(R)]}\E[2\pi(\cosh(R)-1)]\right)\underset{\alpha\to 0}{\longrightarrow}1$$ is the constant that we obtained in the proof of Lemma \ref{alphalemma} when calculating $f(r)$.
Therefore,
\begin{align}
\E[y_r^2]\stackrel{~\eqref{fubini2}}{\le}C(\alpha)^2e^{-2\alpha r}\int_{0}^{\eps}\int_{0}^{\eps}e^{\alpha t_{|\theta-\theta'|}}d\theta\,d\theta'.
\end{align}
Since $\E[y_r]^2=C(\alpha)^2\eps^2 e^{-2\alpha r}$, it follows that
\begin{align}\label{e.smoment}
\frac{\E[y_r]^2}{\E[y_r^2]}\ge \frac{\eps^2}{\int_{0}^{\eps}\int_{0}^{\eps}e^{\alpha t_{|\theta-\theta'|}}d\theta\,d\theta'}
\end{align}
 Using~\eqref{teq}, we see that \begin{align}
t_{|\theta-\theta'|}\le \log\left(\frac{O(1)}{\sqrt{1-\cos(2|\theta-\theta'|)}}\right)\le \log\left(\frac{O(1)}{|\theta-\theta'|}\right).
\end{align}
Hence,
\begin{align}\label{e.alim}
\int_{0}^{\eps}\int_{0}^{\eps}e^{\alpha t_{|\theta-\theta'|}}d\theta\,d\theta'\le O(1)^{\alpha}\int_{0}^{\eps}\int_{0}^{\eps}|\theta-\theta'|^{-\alpha}d\theta d\theta'\stackrel{\alpha\to 0}{\to}\eps^2,
\end{align}
where the limit follows from straight forward calculations. Now~\eqref{e.ets} follows from~\eqref{e.smoment} and~\eqref{e.alim}.\qed
\end{proof}
\section{Visibility with varying intensity}
We consider the case where all radii are deterministic, equal to $R>0$. For a fixed intensity, there exists a critical radius $R_C=\sinh^{-1}\left(\frac{1}{2\lambda}\right)$ under which visibility to infinity occurs with positive probability. When the radius $R$ goes to $0$, this probability goes to $1$. The question we are interested in this section is the following: what happens when the intensity $\lambda$ of the underlying Poisson point process is a function $\lambda(R)$ of the radius which goes to infinity when $R\to 0$?

Let ${\mathfrak V}_{\lambda,R}$ be the total visibility associated with the choice of $R$ for the radius of the balls and $\lambda$ for the intensity of the underlying Poisson point process. In the following result, we show that we can adapt the intensity so that the maximal visibility will not be higher than a fixed level with high probability.
\begin{theorem}
For every $r>0$ and $p\in (0,1)$,
there exists an explicit functional $\lambda(R)$ given by (\ref{equallambda0})
such that
$\lim_{R\to 0} \P[{\mathfrak V}_{\lambda(R),R}\le r]=p.$
\end{theorem}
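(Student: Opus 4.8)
The plan is to use the identity $\{{\mathfrak V}_{\lambda(R),R}\le r\}=\{Y_r=\emptyset\}$ and to reduce the statement to a Poisson limit theorem for the number of ``visibility channels'', i.e. the connected components of $Y_r$. Since the radius is deterministic, the computation in Lemma~\ref{alphalemma} gives $\E[y_r]=2\pi f(r)$ with $f(r)=f_{\lambda,R}(r)=\exp\bl(-\lambda[2\pi(\cosh R-1)+2r\sinh R]\br)$, and, writing $D(\theta):=\area\bl(L_r(\theta)^R\setminus L_r(o)^R\br)$, one has $\P[o\in Y_r,\theta\in Y_r]=f(r)\,e^{-\lambda D(\theta)}$. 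I stress at the outset that Lemma~\ref{tightsecondmoment} only pins $\P[Y_r\neq\emptyset]$ down up to a multiplicative factor $4$, which is far too crude to produce an \emph{exact} limit $p$; the real content of the proof is therefore a genuine Poisson convergence, with the factor-$4$ window playing only the role of a consistency check.

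First I would evaluate the conditional decay of $Y_r$ as $R\to0$. Using the hyperbolic right-triangle relation $\sinh\bl(\dist(x,L_\infty(o))\br)=\sinh(s)\sin\theta$ for the point $x\in L_\infty(\theta)$ at distance $s$ from $o$, the perpendicular separation of the two tubes at arclength $s$ is $\approx\sinh(s)\,\theta$ for small $\theta$, so that $D(\theta)\approx\theta\int_0^r\sinh(s)\,ds=(\cosh r-1)\,\theta$. Hence $\P[\theta\in Y_r\mid o\in Y_r]=e^{-\lambda D(\theta)}\approx e^{-(\cosh r-1)\lambda\,\theta}$ is asymptotically exponential, and a Palm computation identifies the channel lengths as asymptotically exponential with mean $\bar C\sim 1/[(\cosh r-1)\lambda]\to0$; consequently the expected number of channels is $\mu=\E[y_r]/\bar C=2\pi(\cosh r-1)\lambda f(r)$. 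I would then \emph{define} $\lambda(R)$ as the large root of
\begin{equation}\label{equallambda0}
2\pi(\cosh r-1)\,\lambda\,f_{\lambda,R}(r)=\log(1/p).
\end{equation}
A dominant-balance analysis shows this root satisfies $\lambda(R)\sim\log(1/R)/(2rR)\to\infty$, so $f(r)\to0$ and the covered fraction $\E[y_r]/2\pi=f(r)\to0$: one is left with roughly $\mu=\log(1/p)$ channels, each of vanishing length. (As a sanity check, the true second-moment ratio $q=\E[y_r]^2/\E[y_r^2]$ then tends to $\tfrac{\mu}{2+\mu}<1$, consistently with Lemma~\ref{tightsecondmoment}.)

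The crux is to upgrade this into the exact statement that the number $N_r$ of channels converges in law to a Poisson$(\mu)$ variable, for then
\[
\P[{\mathfrak V}_{\lambda(R),R}\le r]=\P[Y_r=\emptyset]=\P[N_r=0]\longrightarrow e^{-\mu}=p,
\]
which is the desired conclusion. The natural tool is a Chen--Stein argument applied to a partition of the circle of directions into cells of angular width $h$ chosen with $\bar C\ll h\ll1$, so that each cell contains $O(\mu h)$ channels and adjacent-cell interactions are confined to a boundary layer of width $\bar C\ll h$.

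The main obstacle, and where the real work lies, is establishing the decoupling needed for this Poisson approximation, since \emph{every} event $\{\theta\in Y_r\}$ depends on the obstacles near $o$. Two features make it go through: the model has finite range $2C$, and the expected number of obstacles meeting two tubes whose directions differ by an order-one angle is $\lambda\cdot O(R^2)\to0$, because such tubes overlap only in a near-origin region of area $O(R^2)$. Hence directions separated by more than $h$ become asymptotically independent, while the contributions of multiple channels inside one cell and of the shared near-origin region are of lower order. Carefully bounding the Chen--Stein error terms, and in particular controlling this residual near-origin dependence uniformly, then letting $R\to0$ followed by $h\to0$, is the delicate part that turns the factor-$4$ estimate of Lemma~\ref{tightsecondmoment} into the precise limit $e^{-\mu}=p$.
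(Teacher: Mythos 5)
Your reduction is the same one the paper uses: the event $\{{\mathfrak V}_{\lambda,R}\le r\}$ is a circle-covering event for the "shadows" that the balls cast on the circle of directions, and your bookkeeping is consistent with the paper's — the number of relevant balls is Poisson of mean $2\pi\Lambda$ with $\Lambda\to\lambda(\cosh r-1)$, the expected uncovered measure is $2\pi f(r)$, the typical gap length is $1/[\lambda(\cosh r-1)]$, so the expected number of gaps is $\mu=2\pi\lambda(\cosh r-1)f(r)$, and your implicit equation $\mu=\log(1/p)$ has the same dominant balance $\lambda(R)=[\log(1/R)+\log\log(1/R)+O(1)]/(2rR)$ as the paper's explicit formula \eqref{equallambda0}. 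The genuine gap is that the decisive step — the Poisson limit for the number of gaps in the critical window, which you correctly identify as "the crux" — is never actually proved: you outline a Chen--Stein scheme and then defer "the delicate part". That step \emph{is} the theorem; everything before it is first-moment heuristics. The paper closes exactly this hole by observing that the problem is \emph{exactly} an i.i.d.\ arc-covering problem (given the Poisson count, the shadows are independent, uniformly centered arcs whose normalized lengths $A_{\ovr,\ovR}/\ovR$ converge in distribution and satisfy a $(1+\varepsilon)$-moment bound), and then invoking a known covering theorem of Janson, whose conclusion is precisely that in the critical window the covering probability tends to $\exp(-e^{-t})$ (equivalently, the gap count is asymptotically Poisson). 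Without either citing such a result or carrying out the Chen--Stein program in full, your argument is incomplete at its central point.

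Two further remarks on the part you flag as delicate. First, the exact i.i.d.-arcs formulation dissolves your worry about "residual near-origin dependence": there is no dependence beyond the arcs themselves, since a ball near the origin simply \emph{is} a long arc. But this exposes the real difficulty your sketch underestimates: the arc-length distribution has heavy relative tails (a ball at Euclidean distance $\|x\|$ from the origin casts an arc of normalized length of order $\ovR(1-\|x\|^2)/\|x\|$, unbounded relative to the typical size $\ovR$), and in Janson's theorem this is handled by the hypothesis $\E[\widetilde{{\mathcal R}}^{1+\varepsilon}]<\infty$; any hand-rolled Chen--Stein argument must cope with these macroscopic arcs, which your cell decomposition with $\bar C\ll h\ll 1$ does not address. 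Second, a cosmetic mismatch with the statement: the theorem asks for an \emph{explicit} $\lambda(R)$, whereas yours is defined implicitly as a root of a transcendental equation; this is fixable (solve the balance to the required $o(1)$ precision, as the paper does in \eqref{equallambda0}), but only after the Poisson limit is available to tell you which precision is required.
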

\begin{proof}
We denote by $\overline{r}=\tanh(r/2)$  and $\overline{R}=\tanh(R/2)$.
A ball $B_{\H}(x,R)$ intersects $B_{\H}(0,r)=B_{\R^2}(0,\overline{r})$ if and only if
$\|x\|\le \alpha(\overline{r})$ where for every $\ovr\in [0,1]$,
$$\alpha(\ovr)=\frac{\sqrt{(1-\ovR^2)^2+4(\ovR+\ovr)(\ovR+\ovR^2\ovr)}-(1-\overline{R}^2)}{2\ovR(1+\ovR\ovr)}.$$
The number of such $x$ is Poisson distributed of mean
\begin{equation}\label{eq:1}
2\pi\Lambda=4\lambda\pi\int_0^{\alpha(\overline{r})}\frac{2\rho}{(1-\rho^2)^2}d\rho=4\lambda\pi\frac{\alpha^2(\overline{r})}{1-\alpha^2(\overline{r})}.
\end{equation}
These points are independent, rotation-invariant and the common density of their radial coordinates is
\begin{equation}
  \label{eq:53}
f(\rho)=\frac{4\lambda}{\Lambda} {\bf 1}_{[0,\alpha(\overline{r})]}(\rho)\frac{\rho}{(1-\rho^2)^2}.
\end{equation}
In particular, the
(normalized) size $A_{\ovr,\ovR}$ of the 'shadow' of one such ball $B_{\H}(x,R)$
is equal to
\begin{equation}
  \label{eq:54}
\frac{1}{\pi}\arcsin\left(\ovR\frac{1-\|x\|^2}{\|x\|(1-\ovR)^2}\right)
\end{equation}
if $0\le\|x\|\le\beta(\overline{r})=\sqrt{\frac{\ovR^2+\ovr^2}{\ovR^2\ovr^2+1}}$
and something smaller if $\beta(\ovr)<\|x\|\le \alpha(\ovr).$
It is easy to check that when $R\to 0$, the probability that a $f$-distributed random variable is in $[\beta(\ovr),\alpha(\ovr)]$ goes to $0$.
Consequently, we may use the formula (\ref{eq:54}) combined with (\ref{eq:53}) to show that $A_{\ovr,\ovR}/\ovR$ converges in distribution to a limit distribution.

The probability of the event $\{{\mathfrak V}_{\lambda,R}\le r\}$ is equal to the probability to cover the Euclidean circle centered at the origin and of radius $\ovr$ by a Poisson number of mean $2\pi\Lambda$ of i.i.d. random arcs such that their normalized lengths are distributed as $A_{\ovr,\ovR}$. We are going to use a slightly modified version of an original result due
to Janson:
for every $\Lambda,\varepsilon>0$, let $p_{\Lambda,\varepsilon}$ be
the probability of covering the circle of perimeter one  with a Poisson number of mean $2\pi\Lambda$ of independent
 and uniformly located random arcs with a half-length distributed as
 $\varepsilon \widetilde{{\mathcal R}}_{\Lambda}$, $\widetilde{{\mathcal R}}_{\Lambda}$ being a bounded random variable for every $\Lambda$. If:
\begin{enumerate}
\item $\widetilde{{\mathcal R}}_{\Lambda} \to \widetilde{{\mathcal R}}$ in distribution as $\Lambda\to \infty$, where $\widetilde{{\mathcal R}}$ is a random variable with a finite moment of order $(1+\varepsilon)$ for some $\varepsilon>0$, and
\item $\varepsilon$ (going to $0$) and $\Lambda$ (going to $\infty$) are related such that the following convergence occurs:
  \begin{equation}
    \label{eq:2}
\lim_{\epsilon \to 0, \Lambda\to\infty
  }\left\{2\pi b{\varepsilon}\Lambda+
  \log(b{\varepsilon})-\log(-\log(b{\varepsilon}))
\right\}=t
  \end{equation}
where $b:=\frac{1}{\pi}\E[\widetilde{{\mathcal R}}],$
\end{enumerate}
then the probability $p_{\Lambda,\varepsilon}$ goes to
$\exp(-e^{-t})$.

We apply the above result with the choice $\e=\overline{R}$, $\Lambda$
given by (\ref{eq:1}) and $t$ such that $\exp(-e^{-t})=p$. We can verify that in this case $\widetilde{R}$ is distributed as $\frac{1-X^2}{X}$ (up to a multiplicative constant) where $X$ is $f$-distributed. In particular, $\E[\widetilde{R}^{1+\varepsilon}]<\infty$ for every $0\le \varepsilon<1$.
With the choice
\begin{equation}\label{equallambda0}
\lambda(R)=\frac{1-\alpha^{2}(\ovr)}{2\alpha^2(\ovr)}\left[-\frac{\log(\overline{R})}{2\pi b \overline{R}}+\frac{\log(-\log(\overline{R}))}{2\pi b \overline{R}}+\frac{t-\log(b)}{2\pi b\overline{R}}\right],
\end{equation}
we deduce from the covering result due to Janson that
$$\lim_{R\to 0} \P[{\mathfrak V}_{\lambda(R),R}\le r]=p.$$
\end{proof}
\bibliographystyle{plain}
\bibliography{tykesson}

\def\cprime{$'$} \def\cprime{$'$}
\begin{thebibliography}{10}

\bibitem{BJST}
Itai Benjamini, Johan Jonasson, Oded Schramm, and Johan Tykesson.
\newblock Visibility to infinity in the hyperbolic plane, despite obstacles.
\newblock {\em ALEA Lat. Am. J. Probab. Math. Stat.}, 6:323--342, 2009.

\bibitem{Cal}
Pierre Calka.
\newblock The distributions of the smallest disks containing the
  {P}oisson-{V}oronoi typical cell and the {C}rofton cell in the plane.
\newblock {\em Adv. in Appl. Probab.}, 34(4):702--717, 2002.

\bibitem{CMPB}
Pierre Calka, Julien Michel, and Sylvain Porret-Blanc.
\newblock Asymptotics of the visibility function in the boolean model.
\newblock {\em available at {\tt http://arxiv.org/abs/0905.4874}}, 2010.

\bibitem{Hall}
Peter Hall.
\newblock {\em Introduction to the theory of coverage processes}.
\newblock John Wiley \& Sons Inc., New York, 1988.

\bibitem{J}
Johan Jonasson.
\newblock Dynamical circle covering with homogeneous {P}oisson updating.
\newblock {\em Statist. Probab. Lett.}, 78(15):2400--2403, 2008.

\bibitem{K}
Jean-Pierre Kahane.
\newblock {\em Some random series of functions}, volume~5 of {\em Cambridge
  Studies in Advanced Mathematics}.
\newblock Cambridge University Press, Cambridge, second edition, 1985.

\bibitem{flavors}
Silvio Levy, editor.
\newblock {\em Flavors of geometry}, volume~31 of {\em Mathematical Sciences
  Research Institute Publications}.
\newblock Cambridge University Press, Cambridge, 1997.

\bibitem{meester}
Ronald Meester and Rahul Roy.
\newblock {\em Continuum percolation}, volume 119 of {\em Cambridge Tracts in
  Mathematics}.
\newblock Cambridge University Press, Cambridge, 1996.

\bibitem{PB}
Sylvain Porret-Blanc.
\newblock Sur le caract\`ere born\'e de la cellule de {C}rofton des mosa\"\i
  ques de g\'eod\'esiques dans le plan hyperbolique.
\newblock {\em C. R. Math. Acad. Sci. Paris}, 344(8):477--481, 2007.

\bibitem{Santalo}
Luis~A. Santal{\'o}.
\newblock {\em Integral geometry and geometric probability}.
\newblock Addison-Wesley Publishing Co., Reading, Mass.-London-Amsterdam, 1976.
\newblock With a foreword by Mark Kac, Encyclopedia of Mathematics and its
  Applications, Vol. 1.

\bibitem{T}
Johan Tykesson.
\newblock The number of unbounded components in the {P}oisson {B}oolean model
  of continuum percolation in hyperbolic space.
\newblock {\em Electron. J. Probab.}, 12:no. 51, 1379--1401 (electronic), 2007.

\end{thebibliography}
\end{document}